\newtheorem{precor}{{\bf Corollary}}
\newenvironment{cor}{\begin{precor}{\hspace{-0.5
               em}{\bf.\ }}}{\end{precor}}
\newtheorem{precon}{{\bf Conjecture}}
\newenvironment{con}{\begin{precon}{\hspace{-0.5
               em}{\bf.\ }}}{\end{precon}}
\newtheorem{prealphcon}{{\bf Conjecture}}
\newtheorem{predefin}{{\bf Definition}}
\newtheorem{preexm}{{\bf Example}}
\newtheorem{preappl}{{\bf Application}}
\newtheorem{prelem}{{\bf Lemma}}
\newenvironment{lem}{\begin{prelem}{\hspace{-0.5
               em}{\bf.\ }}}{\end{prelem}}
\newtheorem{preproof}{{\bf Proof.\ }}
\newenvironment{proof}[1]{\begin{preproof}{\rm
               #1}\hfill{$\blacksquare$}}{\end{preproof}}
\newtheorem{prethm}{{\bf Theorem}}
\newenvironment{thm}{\begin{prethm}{\hspace{-0.5
               em}{\bf.\ }}}{\end{prethm}}
\newtheorem{prealphthm}{{\bf Theorem}}
\newenvironment{alphthm}{\begin{prealphthm}{\hspace{-0.5
               em}{\bf.\ }}}{\end{prealphthm}}
\newtheorem{prealphlem}{{\bf Lemma}}
\newenvironment{alphlem}{\begin{prealphlem}{\hspace{-0.5
               em}{\bf.\ }}}{\end{prealphlem}}
\newtheorem{prepro}{{\bf Proposition}}
\newtheorem{preprb}{{\bf Problem}}
\newtheorem{prerem}{{\bf Remark}}
\newtheorem{preapp}{{\bf Application}}
\newtheorem{prequ}{{\bf Question}}
\newtheorem{preclaim}{{\bf Claim}}
\def\conct[#1,#2]{\mbox {${#1} \leftrightarrow {#2}$}}
\def\dconct[#1,#2]{\mbox {${#1} \rightarrow {#2}$}}
\def\deg[#1,#2]{\mbox {$d_{_{#1}}(#2)$}}
\def\mindeg[#1]{\mbox {$\delta_{_{#1}}$}}
\def\maxdeg[#1]{\mbox {$\Delta_{_{#1}}$}}
\def\outdeg[#1,#2]{\mbox {$d_{_{#1}}^{^+}(#2)$}}
\def\minoutdeg[#1]{\mbox {$\delta_{_{#1}}^{^+}$}}
\def\maxoutdeg[#1]{\mbox {$\Delta_{_{#1}}^{^+}$}}
\def\indeg[#1,#2]{\mbox {$d_{_{#1}}^{^-}(#2)$}}
\def\minindeg[#1]{\mbox {$\delta_{_{#1}}^{^-}$}}
\def\maxindeg[#1]{\mbox {$\Delta_{_{#1}}^{^-}$}}
\def\dre[#1,#2,#3]{\mbox {${\cal E}^{^{#3}}(#1,#2)$}}
\def\var[#1,#2]{\mbox {${\rm Var}_{_{#1}}(#2)$}}
\def\ls[#1]{\mbox {$\xi^{^{#1}}$}}
\def\hom[#1,#2]{\mbox {${\rm Hom}({#1},{#2})$}}
\def\onvhom[#1,#2]{\mbox {${\rm Hom^{v}}(#1,#2)$}}
\def\onehom[#1,#2]{\mbox {${\rm Hom^{e}}(#1,#2)$}}
\def\core[#1]{\mbox {$#1^{^{\bullet}}$}}
\def\cay[#1,#2]{\mbox {${\rm Cay}({#1},{#2})$}}
\def\sch[#1,#2,#3]{\mbox {${\rm Sch}({#1},{#2},{#3})$}}
\def\cays[#1,#2]{\mbox {${\rm Cay_{s}}({#1},{#2})$}}
\def\dirc[#1]{\mbox {$\stackrel{\rightarrow}{C}_{_{#1}}$}}
\def\cycl[#1]{\mbox {${\bf Z}_{_{#1}}$}}
\begin{document}
\footnotetext[1]{The research of Hossein Hajiabolhassan is supported by ERC advanced grant GRACOL.}
\begin{center}
{\Large \bf  On The Chromatic Number of Matching  Graphs}\\
\vspace{0.3 cm}
{\bf Meysam Alishahi$^\dag$ and Hossein Hajiabolhassan$^\ast$\\
{\it $^\dag$ Department of Mathematics}\\
{\it University of Shahrood, Iran}\\
{\tt meysam\_alishahi@shahroodut.ac.ir}\\
{\it $^\ast$ Department of Mathematical Sciences}\\
{\it Shahid Beheshti University, G.C.}\\
{\it P.O. Box {\rm 19839-63113}, Tehran, Iran}\\
{\tt hhaji@sbu.ac.ir}\\
}
\end{center}
\begin{abstract}
\noindent
In an earlier paper, the present authors (2013)~\cite{Alishahi2015}
introduced  the altermatic number of graphs and used
Tucker's Lemma, an equivalent combinatorial version of the Borsuk-Ulam Theorem, to show that
the altermatic number is a lower bound for the chromatic number.
A matching graph has the set of all matchings of a specified size of a graph as vertex set and
two vertices are adjacent if the corresponding matchings are edge-disjoint. It is known that the Kneser graphs, the Schrijver graphs, and
the permutation graphs can be represented by matching graphs. In this paper,
as a generalization of the well-known result of Schrijver about the chromatic number of Schrijver graphs,
we determine the chromatic number of a large family of matching graphs
by specifying their altermatic number. In particular, we determine the chromatic number
of these matching graphs in terms of the generalized Tur\'an number of matchings.\\

\noindent {\bf Keywords:}\ { Chromatic Number, General Kneser Hypergraph,Tur\'an Number.}\\
{\bf Subject classification: 05C15 (05C65, 55U10)}
\end{abstract}
\section{Introduction}
For a hypergraph ${\cal H}$, the vertex set of {\it the general Kneser graph} ${\rm KG}({\cal H})$ is the set of all hyperedges of
${\cal H}$ and two vertices are adjacent if the corresponding hyperedges are disjoint.
It is known that any graph can be represented
by various  general Kneser graphs. In this paper, we show that one
can determine the chromatic number of some graphs, by choosing
some appropriate representations for them. In view of Tucker's Lemma, an equivalent combinatorial version of the Borsuk-Ulam Theorem,
the present authors~\cite{Alishahi2015} introduced the altermatic number
of Kneser hypergraphs. Moreover, they showed that it provides a tight lower bound for
the chromatic number of Kneser hypergraphs.
Next, they determined the chromatic number of  some families of graphs, e.g.,
Kneser multigraphs, see~\cite{2014arXiv1401.0138A}.

A {\it matching graph} can be
represented by the general Kneser graph ${\rm KG}({\cal H})$,
where the hyperedges of ${\cal H}$ are corresponding to
all matchings of a specified size of a graph.
It is worth noting that a Schrijver graph is a matching graph ${\rm KG}({\cal H})$, where
the hyperedge set of ${\cal H}$ is corresponding to all matchings of a specified size of a cycle.
Hence, by determining the chromatic number of matching graphs,
we generalize the well-known result of Schrijver about the chromatic number of Schrijver graphs.

This paper is organized as follows. In Section~\ref{secnotation}, we set up
notations and terminologies. In particular, we will be concerned
with the definition of altermatic number and strong altermatic
number and we mention some results about them. Also, we introduce
the alternating Tur\'an number of graphs which is a
generalization of the generalized Tur\'an number. Next, we
introduce a lower bound for the chromatic number in terms of the
alternating Tur\'an number. In fact, if one can show that the
alternating Tur\'an number is the same as the generalized Tur\'an
number for a family of graphs, then one can determine the
chromatic number of some family of graphs.

In Section~\ref{secmatching}, we study
the chromatic number of matching graphs. In particular, as a generalization of
the well-known result of Schrijver, we specify the chromatic number of a large family of matching graphs
in terms of the generalized Tur\'an number of matchings. Also, as a
consequence, we determine the chromatic number of large permutation graphs.
\section{Notations and Terminologies}\label{secnotation}
In this section, we setup some notations and terminologies.
Hereafter, the symbol $[n]$ stands for the set $\{1,2,\ldots, n\}$.
A {\it hypergraph} ${\cal H}$ is an ordered pair $(V({\cal H}),E({\cal H}))$,
where $V({\cal H})$ is a set of elements called {\it vertices} and  $E({\cal H})$ is a set of nonempty subsets of $V({\cal H})$
called {\it hyperedges}. Unless otherwise stated, we consider simple hypergraphs, i.e.,
$E({\cal H})$ is a family of distinct nonempty subsets of $V({\cal H})$.
A {\it  vertex cover} $T$ of ${\cal H}$ is a subset
of its vertex set which meets any hyperedge of ${\cal H}$. Also, a {\it $k$-coloring} of a hypergraph ${\cal H}$ is a mapping $h:V({\cal H})\longrightarrow [k]$
such that for any hyperedge $e$, we have $|\{h(v):\ v\in e\}|\geq 2$, i.e., no hyperedge is monochromatic. The minimum $k$ such that
there exists a coloring $h:V({\cal H})\longrightarrow [k]$ is called the
{\it chromatic number} of ${\cal H}$ and is denoted by $\chi({\cal H})$.
Note that if the hypergraph ${\cal H}$ has some hyperedge with cardinality $1$, then there is no
$k$-coloring for any $k$. Therefore, we define the chromatic number of such a hypergraph to be infinite.
A hypergraph ${\cal H}$ is {\it $k$-uniform}, if all hyperedges of ${\cal H}$ have the same size $k$.
By a graph $G$, we mean a 2-uniform hypergraph.
Also, let $o(G)$ denote the number of odd components of the graph $G$.
For brevity, we use $G\cong H$ to mention that there is an isomorphism between
two graphs $G$ and $H$. Also, if $G\cong H$, then we say $G$ and $H$ are {\it isomorphic}.
A {\it homomorphism} from a graph $G$ to a graph $H$
is a mapping $f:V(G)\longrightarrow V(H)$ which preserves the adjacency, i.e.,
if $xy\in E(G)$, then $f(x)f(y)\in E(H)$.
For brevity, we use $G\longrightarrow H$ to denote that there is a homomorphism from
$G$ to $H$. If we have both $G\longrightarrow H$ and $H\longrightarrow G$,
then we say $G$ and $H$ are {\it homomorphically equivalent} and show this by $G\longleftrightarrow H$.
Note that $\chi(G)$ is the minimum integer $k$ for which there is a homomorphism from $G$ to
the complete graph $K_k$. For a subgraph $H$ of $G$, the subgraph $G\setminus H$ is obtained from $G$ by removing the edge set of $H$.
Also, $G-H$ is obtained from $G$ by removing the vertices of $H$ with their incident edges.

\subsection{Altermatic Number}
The sequence $x_1,x_2,\ldots,x_m\in \{-1,+1\}$ is said to be an {\it alternating sequence},
if any two consecutive terms are different.
For any $X=(x_1,x_2,\ldots,x_n)\in\{-1,0,+1\}^n\setminus\{(0,0,\ldots,0)\}$, the {\it alternation number} of
$X$,
$alt(X)$, is the length of a longest alternating subsequence of nonzero terms of
$(x_1,x_1,\ldots,x_n)$. Note that we consider just nonzero entries
to determine the alternation number of $X$. However, for the simplicity of notations, we define $alt((0,0,\ldots,0))=0$.
Let $V=\{v_1,v_2,\ldots,v_n\}$ be a set of size $n$ and $L_V$ be the set of all linear orderings of $V$, i.e.,
$L_V=\left\{v_{i_1}<v_{i_2}<\cdots<v_{i_n}\ :\ (i_1,i_2,\ldots,i_n)\in S_n\right\}$.
For any linear ordering $\sigma: v_{i_1}<v_{i_2}<\cdots<v_{i_n}\in L_V$ and $1\leq j\leq n$, define $\sigma(j)=v_{i_j}$.
We sometimes represent a linear ordering of $V$ by a permutation, i.e., $\sigma=(v_{i_1},v_{i_2},\ldots,v_{i_n})$, and
we use interchangeably these two kinds of representations of any linear ordering. Moreover, for any $X=(x_1,\ldots,x_n)\in\{-1,0,+1\}^{n}$,
define $X^+_\sigma=\{\sigma(j):\ x_j=+1\}=\{v_{i_j}:\ x_j=+1\}$ and
$X^-_\sigma=\{\sigma(k):\ x_k=-1\}=\{v_{i_k}:\ x_k=-1\}$.

For any hypergraph ${\cal H}=(V,E)$ and $\sigma\in L_V$, where $|V|=n$. Define $alt_\sigma({\cal H})$ (resp. $salt_\sigma({\cal H})$)
to be the largest integer $k$ such that there exists an
$X\in\{-1,0,+1\}^n$ with  $alt(X)=k$
and that none (resp. at most one) of $X^+_\sigma$ and $X^-_\sigma$ contains any (resp. some) hyperedge of ${\cal H}$. Note that if any singleton is a hyperedge of ${\cal H}$, then $alt({\cal H})=0$. Also, $alt_\sigma({\cal H})\leq salt_\sigma({\cal H})$ and equality can hold.
Now, set $alt({\cal H})=\min\{alt_\sigma({\cal H}):\ \sigma\in L_V\}$
and $salt({\cal H})=\min\{salt_\sigma({\cal H}):\ \sigma\in L_V\}$. Define the {\it altermatic number}
and the {\it strong altermatic number} of a graph $G$, respectively, as follows
$$\zeta(G)=\displaystyle\max_{\cal H}\left\{|V({\cal H})|-alt({\cal H}):\ {\rm KG}({\cal H})\longleftrightarrow G\right\}$$
and
$$\zeta_s(G)=\displaystyle\max_{\cal H}\left\{|V({\cal H})|+1-salt({\cal H}):\ {\rm KG}({\cal H})\longleftrightarrow G\right\}.$$

It was proved in \cite{Alishahi2015,2014arXiv1401.0138A} that both altermatic number
and  strong altermatic number provide tight lower bounds for the chromatic number of graphs.
\begin{alphthm}{\rm \cite{Alishahi2015}}\label{lowermain}
For any graph $G$, we have
$$\chi(G)\geq \max\left\{\zeta(G),\zeta_s(G)\right\}.$$
\end{alphthm}
\subsection{Alternating Tur{\'a}n Number}
Let $G$ be a graph  and ${\cal F}$ be a family of graphs. 
A subgraph of $G$ is called an {\it ${\cal F}$-subgraph}, if  there is an isomorphism between this subgraph and
a member of  ${\cal F}$. {\it The general Kneser graph} ${\rm KG}(G,{\cal F})$
has all ${\cal F}$-subgraphs of $G$ as vertex set and two vertices are adjacent if
the corresponding ${\cal F}$-subgraphs are edge-disjoint.
A graph $G$ is said to be {\it ${\cal F}$-free}, if it has no subgraph
isomorphic to a member of ${\cal F}$.
For a graph $G$, define ${\rm ex}(G,{\cal F})$, the {\it generalized Tur\'an number of the
family ${\cal F}$ in the graph $G$},
to be the maximum number of edges of an ${\cal F}$-free spanning subgraph of $G$.
A spanning subgraph of $G$ is called {\it ${\cal F}$-extremal} if it has ${\rm ex}(G,{\cal F})$-edges and it is ${\cal F}$-free.
We denote the family of all ${\cal F}$-extremal subgraphs of $G$ with $EX(G, {\cal F})$.
It is usually a hard problem to determine the exact value of ${\rm ex}(G, {\cal F})$.
The concept of Tur\'an number was generalized in \cite{2014arXiv1401.0138A} in order to find the chromatic
number of some families of general Kneser graphs as follows.
Let $G$ be a graph with $E(G)=\{e_1,e_2,\ldots,e_m\}$.
For any ordering $\sigma=(e_{i_1},e_{i_2},\ldots,e_{i_m})$ of edges of $G$,
a $2$-coloring of a subset of edges of $G$ (with $2$ colors red and blue) is
said to be {\it alternating} (respect to the ordering $\sigma$), if
any two consecutive colored edges (with respect to the ordering $\sigma$) receive different colors.
Note that we may assign no color to some edges of $G$. In other words, in view of
the ordering $\sigma$, we assign two colors red and blue alternatively to  a subset of edges of $G$.
We use the notation ${\rm ex}_{alt}(G,{\cal F},\sigma)$
(resp. ${\rm ex}_{salt}(G,{\cal F},\sigma)$) to denote the maximum number of edges of a spanning subgraph $H$ of $G$ such that the edges of $H$ can be colored alternatively
(with respect to the ordering $\sigma$) by $2$-colors so that each (resp. at least one of) color class is ${\cal F}$-free.
Now, we are in a position to define the {\it alternating Tur{\'a}n number} ${\rm ex}_{alt}(G,{\cal F})$
and the {\it  strong alternating Tur{\'a}n number} ${\rm ex}_{salt}(G,{\cal F})$ as follows
$${\rm ex}_{alt}(G,{\cal F})=\min\left\{{\rm ex}_{alt}(G,{\cal F},\sigma);\ \sigma\in L_{E(G)}\right\}$$
and
$${\rm ex}_{salt}(G,{\cal F})=\min\left\{{\rm ex}_{salt}(G,{\cal F},\sigma);\ \sigma\in L_{E(G)}\right\}.$$
For a graph $G$, let $F$ be a member of $EX(G, {\cal F})$ and $\sigma$ be an arbitrary ordering of $E(G)$.
Now, if we color the edges of $F$ alternatively  with two colors with respect to the ordering $\sigma$, one can see that
any color class has~no member of ${\cal F}$; and therefore,
${\rm ex}(G,{\cal F})\leq {\rm ex}_{alt}(G,{\cal F},\sigma)$.
Also, it is clear that if we assign colors to more than $2{\rm ex}(G,{\cal F})$ edges,
then a color class has at least more than ${\rm ex}(G,{\cal F})$
edges. It implies
${\rm ex}_{alt}(G,{\cal F},\sigma) \leq 2{\rm ex}(G,{\cal F})$. Consequently,
$${\rm ex}(G,{\cal F})\leq {\rm ex}_{alt}(G,{\cal F})\leq 2{\rm ex}(G,{\cal F}).$$
Next lemma was proved in \cite{2014arXiv1401.0138A}. For the convenince of the reader, we repeat its proof,
thus making our exposition self-contained.
\begin{alphlem}\label{lowerupper}{\rm \cite{2014arXiv1401.0138A}}
For any graph $G$ and family ${\cal F}$ of graphs,
$$\displaystyle\begin{array}{rllll}
|E(G)|-{\rm ex}_{alt}(G, {\cal F}) &    \leq &  \chi({\rm KG}(G,{\cal F})) & \leq  & |E(G)|-{\rm ex}(G, {\cal F}),\\
& & & &  \\
|E(G)|+1-{\rm ex}_{salt}(G, {\cal F}) &    \leq &  \chi({\rm KG}(G,{\cal F})) & \leq  & |E(G)|-{\rm ex}(G, {\cal F}).
\end{array}
$$
\end{alphlem}
\begin{proof}{
Let $K\in EX(G, {\cal F})$. One can check that any vertex of ${\rm KG}(G,{\cal F})$ contains at least an element of $E(G)\setminus E(K)$. This
implies $\chi({\rm KG}(G,{\cal F})) \leq  |E(G)|-{\rm ex}(G, {\cal F})$. On the other hand,
consider the hypergraph $H$ whose vertex set is $E(G)$ and hyperedge set  consists of all subgraphs of $G$ isomorphic to some member of ${\cal F}$.
Note that ${\rm KG}(H)$ is isomorphic to ${\rm KG}(G,{\cal F})$.
One can check that $alt(H)={\rm ex}_{alt}(G, {\cal F})$ and
$salt(H)={\rm ex}_{salt}(G, {\cal F})$. Now, in view of Theorem~\ref{lowermain},
the assertion holds.
}\end{proof}
The previous lemma enables us to find the chromatic number of some families of graphs.
If we present an appropriate ordering $\sigma$ of the edges of $G$ such that
${\rm ex}_{alt}(G,{\cal F})={\rm ex}(G,{\cal F})$ or
${\rm ex}_{salt}(G,{\cal F})-1={\rm ex}(G,{\cal F})$,
then one can conclude that
$$\chi({\rm KG}(G,{\cal F}))=|E(G)|-{\rm ex}(G,{\cal F}).$$
By this observation, the chromatic number of several
families of graphs was determined in~\cite{2014arXiv1401.0138A}.

Hereafter, for a given $2$-coloring of a subset of edges of $G$,
a spanning subgraph of $G$ whose edge set
consists of all edges such that red (resp. blue) color has been assigned to them,
is termed the {\it red subgraph} $G^R$ (resp. {\it blue subgraph} $G^B$).
Furthermore, by abuse of language, any edge of $G^R$ (resp. $G^B$) is termed a red edge (resp. blue edge).
\section{Matching Graphs}\label{secmatching}

In this section, we investigate the chromatic number of graphs via their altermatic number.
First, in Subsection~\ref{subsecschrijver}, we study the chromatic number of the matching graph ${\rm KG}(G,rK_2)$ when
$G$ is a sparse graph. In contrast, in Subsection~\ref{subsecdense}, we determine the chromatic number of  the matching graph ${\rm KG}(G,rK_2)$ provided that $G$ is a large dense graph.
\subsection{A Generalization of Schrijver's Theorem}\label{subsecschrijver}
Matching graphs can be
considered as a generalization of Kneser, Schrijver, and permutation graphs. In fact, one can check that ${\rm KG}(nK_2,rK_2)$, ${\rm KG}(C_n,rK_2)$, and ${\rm KG}(K_{m,n},rK_2)$,
are isomorphic to  Kneser, Schrijver, and permutation graphs, respectively.
Hence, as a generalization of Lov\'asz's Theorem~\cite{MR514625} and Schrijver's Theorem~\cite{MR512648}, it would be of interest to determine the chromatic number of matching graph ${\rm KG}(G,rK_2)$. It seems that for any graph $G$, we usually
have $\chi({\rm KG}(G,rK_2))=|E(G)|-{\rm ex}(G,rK_2)$, but the assertion is~not true when $G$ is~not a connected graph. For instance,
note that $\chi({\rm KG}(nK_2,rK_2))=|E(nK_2)|-2{\rm ex}(nK_2,rK_2)$. In
this section, we introduce some sufficient conditions such that the equality $\chi({\rm KG}(G,rK_2))=|E(G)|-{\rm ex}(G,rK_2)$ holds.

A famous generalization of Tutte's Theorem by Berge in 1985,
says that the largest number of vertices saturated by
a matching in $G$ is $\displaystyle\min_{S\subseteq V(G)}\{|V(G)|-o(G-S)+|S|\}$, where
$o(G-S)$ is the number of odd components in $G-S$. For a bipartite graph, we define its odd girth to be infinite.
\begin{thm}\label{grk2}
Let $r\geq 2$ be an integer and $G$ be a connected graph with odd girth at least $g$,
vertex set $V(G)=\{v_1,v_2,\ldots,v_n\}$, and degree sequence
 ${\rm deg}_G(v_1)\geq {\rm deg}_G(v_2)\geq \cdots\geq {\rm deg}_G(v_n)$.
 Moreover, suppose that
 $r\leq \max \{ {g\over 2}, {{\rm deg}_G(v_{r-1})+1\over 4} \}$ and $\{v_1,\ldots,v_{r-1}\}$ forms an independent set.
 If ${\rm deg}_G(v_{r-1})$ is an even integer
 or ${\rm deg}_G(v_{r-1})> {\rm deg}_G(v_r)$, then $\chi({\rm KG}(G,rK_2))=|E(G)|-\displaystyle\sum_{i=1}^{r-1}{\rm deg}_G(v_i)$.
 \end{thm}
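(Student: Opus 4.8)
The plan is to apply Lemma~\ref{lowerupper} with ${\cal F}=\{rK_2\}$ and to pin both of its outer bounds to the value $\sum_{i=1}^{r-1}{\rm deg}_G(v_i)$. Write $U=\{v_1,\dots,v_{r-1}\}$ and let $F^\ast$ be the spanning subgraph whose edge set consists of all edges meeting $U$. Since $U$ is independent, the stars centred at the $v_i$ are edge-disjoint, so $|E(F^\ast)|=\sum_{i=1}^{r-1}{\rm deg}_G(v_i)$; moreover $U$ is a vertex cover of $F^\ast$, so its matching number is at most $r-1$ and $F^\ast$ is $rK_2$-free. Hence ${\rm ex}(G,rK_2)\geq\sum_{i=1}^{r-1}{\rm deg}_G(v_i)$, and the right-hand inequality of Lemma~\ref{lowerupper} already delivers $\chi({\rm KG}(G,rK_2))\leq |E(G)|-\sum_{i=1}^{r-1}{\rm deg}_G(v_i)$ for free. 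All the content therefore sits in the matching lower bound: it suffices to exhibit a single ordering $\sigma$ of $E(G)$ with ${\rm ex}_{alt}(G,rK_2,\sigma)\leq\sum_{i=1}^{r-1}{\rm deg}_G(v_i)$, which (since ${\rm ex}\leq{\rm ex}_{alt}$ always) simultaneously forces ${\rm ex}(G,rK_2)=\sum_{i=1}^{r-1}{\rm deg}_G(v_i)$.

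Unwinding the definition, I must prove that for a well-chosen $\sigma$ every spanning subgraph admitting an alternating $2$-colouring (with respect to $\sigma$) whose two colour classes $G^R$ and $G^B$ are both $rK_2$-free satisfies $|E(G^R)|+|E(G^B)|\leq|E(F^\ast)|$. The disconnected instance $G=nK_2$, where ${\rm ex}_{alt}=2\,{\rm ex}$, shows this is false without the hypotheses; connectivity must therefore enter by letting me splice a red near-matching and a blue near-matching into a single monochromatic matching of size $r$. Accordingly I would design $\sigma$ to interleave $E(F^\ast)$ with $E(G)\setminus E(F^\ast)$ so that, once the colouring is forced to alternate, any surplus beyond $|E(F^\ast)|$ edges is absorbable into a monochromatic matching: the $r-1$ stars at $U$ furnish $r-1$ pairwise-disjoint edges of a prescribed colour, and a single coloured edge lying off $F^\ast$ supplies the $r$-th disjoint edge of that colour, producing an $rK_2$ in one class and contradicting $rK_2$-freeness.

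The hypothesis $r\leq\max\{\,g/2,\ ({\rm deg}_G(v_{r-1})+1)/4\,\}$ splits the argument into two regimes. In the dense regime ${\rm deg}_G(v_{r-1})\geq 4r-1$, each $v_i\in U$ has so many incident edges that a greedy selection yields $r-1$ disjoint edges of the majority colour whose endpoints avoid any fixed set of $O(r)$ already-used vertices, after which connectivity attaches one further edge to complete the monochromatic $rK_2$. In the sparse regime $g\geq 2r$ (no odd cycle shorter than $2r$), I would instead generalise the cyclic mechanism behind Schrijver's theorem: order $E(G)$ along a traversal so that in the forced alternating colouring consecutive same-colour edges are vertex-disjoint, the absence of short odd cycles guaranteeing that taking every other edge of a colour class produces an honest matching rather than a short closed walk. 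The Berge--Tutte formula quoted before the statement enters here to certify that an $rK_2$-free colour class cannot conceal a larger matching. This conversion of a purely global count $|E(G^R)|+|E(G^B)|>|E(F^\ast)|$ into a concrete monochromatic $rK_2$, while respecting the rigid alternation imposed by $\sigma$, is the main obstacle.

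Finally, the dichotomy ``${\rm deg}_G(v_{r-1})$ even or ${\rm deg}_G(v_{r-1})>{\rm deg}_G(v_r)$'' is what resolves the tight boundary. A strict inequality ${\rm deg}_G(v_{r-1})>{\rm deg}_G(v_r)$ makes the star forest the essentially unique extremal $rK_2$-free subgraph, removing the off-by-one slack in the splicing step, so the plain bound ${\rm ex}_{alt}(G,rK_2,\sigma)\leq|E(F^\ast)|$ goes through. When instead ${\rm deg}_G(v_{r-1})$ is even, I expect to route the lower bound through the strong alternating Tur\'an number: the evenness lets me split the $\deg_G(v_{r-1})$ edges at $v_{r-1}$ into two equal colour classes, which is exactly the configuration accounted for by the extra $+1$ in the second inequality of Lemma~\ref{lowerupper}, giving ${\rm ex}_{salt}(G,rK_2)-1=\sum_{i=1}^{r-1}{\rm deg}_G(v_i)$. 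In either case the relevant inequality of Lemma~\ref{lowerupper} yields $\chi({\rm KG}(G,rK_2))\geq|E(G)|-\sum_{i=1}^{r-1}{\rm deg}_G(v_i)$, and combined with the easy upper bound this pins the chromatic number to $|E(G)|-\sum_{i=1}^{r-1}{\rm deg}_G(v_i)$.
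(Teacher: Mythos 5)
Your upper bound and your high-level reduction (find one ordering $\sigma$ with ${\rm ex}_{alt}(G,rK_2,\sigma)\leq\sum_{i=1}^{r-1}{\rm deg}_G(v_i)$, then invoke Lemma~\ref{lowerupper}) agree with the paper. But the heart of the proof is missing, and the mechanism you sketch in its place would fail. First, you never construct $\sigma$. Second, your ``splicing'' step --- taking $r-1$ same-coloured edges from the stars at $U$ plus one coloured edge off $F^\ast$ to build a monochromatic $rK_2$ --- is unjustified: edges from stars at distinct $v_i,v_j$ may share their endpoints outside $U$, the off-$F^\ast$ edge may meet them, and nothing forces an adversarial alternating colouring to place same-coloured edges in $r-1$ distinct stars at all. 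Third, your sparse-regime claim that a suitable traversal makes ``every other edge of a colour class'' a matching is false for general graphs: any Eulerian-type tour revisits vertices, so two same-coloured edges close in the ordering can share a vertex; this device is special to cycles, which is exactly why Schrijver's theorem is the easy instance. Finally, your use of Berge--Tutte (``to certify that an $rK_2$-free colour class cannot conceal a larger matching'') is circular --- that is just the definition of $rK_2$-free.

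The paper's proof runs differently. The ordering $\sigma$ comes from an Eulerian tour of $G$ (after adding, if needed, an auxiliary vertex $w$ joined to all odd-degree vertices), and its only role is the local counting property: every vertex $x$ of $G$ is incident to at most $\lceil {\rm deg}_G(x)/2\rceil$ edges of each colour. Then one assumes a colour class $G^j$ has no $r$-matching and applies the Tutte--Berge formula to extract a set $S^j$ with $|V(G)|-o(G^j-S^j)+|S^j|\leq 2r-2$. In the regime $r\leq g/2$, every component of $G^j-S^j$ has fewer than $g$ vertices, hence is bipartite, so $S^j$ together with the smaller sides of the components gives at most $r-1$ vertices covering $E(G^j)$; in the regime $r\leq ({\rm deg}_G(v_{r-1})+1)/4$, the component edge counts are bounded by binomial coefficients. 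In both regimes the $\lceil{\rm deg}/2\rceil$ property turns this cover into an edge count that contradicts the prescribed number of coloured edges; no monochromatic matching is ever exhibited. Your reading of the final dichotomy is also off: the paper splits on whether $s$, the number of odd degrees among ${\rm deg}_G(v_1),\ldots,{\rm deg}_G(v_{r-1})$, is zero (then the strong alternating Tur\'an number with $t=2+\sum{\rm deg}_G(v_i)$ coloured edges) or nonzero (then the alternating Tur\'an number with $t=1+\sum{\rm deg}_G(v_i)$), and the hypothesis ``${\rm deg}_G(v_{r-1})$ even or ${\rm deg}_G(v_{r-1})>{\rm deg}_G(v_r)$'' is what closes the counting in the $s\neq 0$ case --- it is not, as you propose, a switch that sends the even case to the salt route and the strict-inequality case to the alt route.
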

\begin{proof}{
Consider the subgraph of $G$ containing  of all edges incident to some $v_i$, for $1\leq i\leq r-1$. This subgraph does~not
have any $r$-matching; consequently,
we have $\chi({\rm KG}(G,rK_2))\leq |E(G)|-\displaystyle\sum_{i=1}^{r-1}{\rm deg}_G(v_i)$. Hence,
it is sufficient to show that $G$ satisfies
$\chi({\rm KG}(G,rK_2))\geq |E(G)|-\displaystyle\sum_{i=1}^{r-1}{\rm deg}_G(v_i)$.
Set $s$ to be the number of $v_i$'s such that ${\rm deg}_G(v_i)$ is an odd integer, where $1\leq i\leq r-1$.
If $G$ is an even graph, set $H=G$; otherwise, add a new vertex $w$ and join it to any
odd vertex of $G$ to obtain the graph $H$. Now,
$H$ has an Eulerian tour $e'_1,e'_2,\ldots,e'_{m}$, where if $G$ is~not an even graph,
we start the Eulerian tour with $w$;
otherwise, it starts with $v_{n}$. Consider the
ordering $(e'_1, e'_2, \ldots, e'_{m})$ and remove all new edges incident with $w$
from this ordering to obtain the ordering $\sigma$ for the edge set of $G$. In other words,
if we traverse the edge $e_i$ before than the edge
$e_j$ in the Eulerian tour, then in the ordering $\sigma$ we have $e_i < e_j$.
Now, consider an alternating coloring (with colors blue and red)
of edges of $G$ with respect to the ordering $\sigma$ of length $t$, where if $s\not=0$, then
$t=1+\displaystyle\sum_{i=1}^{r-1}{\rm deg}_G(v_i)$; otherwise, $t=2+\displaystyle\sum_{i=1}^{r-1}{\rm deg}_G(v_i)$. 
Recall that $G^R$ and $G^B$ are the spanning subgraphs of $G$
whose edge sets consist of all red edges and blue edges, respectively. 
We show that if $s=0$,
then each of $G^R$ and $G^B$ has an $r$-matching; and consequently,
${\rm ex}_{salt}(G,rK_2,\sigma)\leq 1+\displaystyle\sum_{i=1}^{r-1}{\rm deg}_G(v_i)$.
Also, we show that, if $s\not =0$, then $G^R$ or $G^B$ has an $r$-matching; and consequently,
${\rm ex}_{alt}(G,rK_2,\sigma)\leq \displaystyle\sum_{i=1}^{r-1}{\rm deg}_G(v_i)$. In view of Lemma \ref{lowerupper}, these, these imply
$\chi({\rm KG}(G,rK_2))= |E(G)|-\displaystyle\sum_{i=1}^{r-1}{\rm deg}_G(v_i)$.
In view of the ordering $\sigma$, one can see that for any vertex $x$ of $G$
(except at most the first
vertex of the Eulerian tour), color red (resp. blue) can be assigned to at most half of
edges adjacent to $x$, i.e., $\lceil  {{\rm deg}_G(x)\over2}\rceil$. For the first vertex,
if $t$ is an even integer, then any color can be assigned to at most half of
edges incident to it. This amount can be increased by at most one when $t$ is an odd integer.
Now, the proof falls into two parts.

\begin{enumerate}
 \item[a{\rm )}] First, we show that if $r\leq {g\over 2}$, then the assertion holds. In fact,
 if $s\not=~0$, then we determine the chromatic number by evaluating the altermatic chromatic number. Otherwise,
we show that the strong altermatic chromatic number is equal to chromatic number. For
$j\in\{R,B\}$, if $G^j$ does~not have any $r$-matching,
then in view of the Tutte-Berge Formula,
there exists an $S^j\subseteq V(G^j)=V(G)$ such that $|V(G^j)|-o(G^j-S^j)+|S^j|\leq 2r-2$.
Suppose that $O^j_1,O^j_2,\ldots,O^j_{t_j}$ are the components of $G^j-S^j$.
One can check that for each $1\leq i\leq t_j$, $|V(O^j_i)|\leq 2r-|S^j|-1\leq g-1$.
Therefore, every component $O^j_i$ does~not have any odd cycle and so
it would be a bipartite graph.
Assume that $O^j_i=O^j_i(X^j_i,Y^j_i)$ such that $|X^j_i|\leq|Y^j_i|$ ($X^j_i$ may be an empty set).
Set $X^j=\displaystyle\cup_{i=1}^{t_j}X^j_i$.
Note that
$$|X^j|=\displaystyle\sum_{i=1}^{t_j}\lfloor{|V(O^j_i)|\over 2}\rfloor\leq {|V(G^j)|-|S^j|\over 2}-{|V(G^j)|+|S^j|-2r+2\over 2}
= r-|S^j|-1.$$
Therefore, for $s=0$ and any $j\in \{R,B\}$, if $G^j$ does~not have any $r$-matching, then
$$\displaystyle\begin{array}{lll}
1+\displaystyle\sum_{i=1}^{r-1}{{\rm deg}_G(v_i)\over 2} &    = & |E(G^j)| \\
& & \\
         & \leq & \displaystyle\sum_{x\in S^j}\lceil{{\rm deg}_G(x)\over 2}\rceil+
                  \displaystyle\sum_{x\in X^j}\lceil{{\rm deg}_G(x)\over 2}\rceil\\
                  & & \\
         & \leq  &\displaystyle\sum_{i=1}^{r-1}{{\rm deg}_G(v_i)\over 2}
\end{array}$$
which is impossible. This means that ${\rm ex}_{salt}(G,rK_2,\sigma)\leq 1+\displaystyle\sum_{i=1}^{r-1}{\rm deg}_G(v_i)$.
Accordingly,  $\chi({\rm KG}(G,rK_2))= |E(G)|-\displaystyle\sum_{i=1}^{r-1}{\rm deg}_G(v_i)$.
Now, assume that $s\not=~0$. Also, suppose that neither $G^R$ nor $G^B$ has any r-matching.
In view of the assumption, ${\rm deg}_G(v_{r-1})$ is an even integer or
${\rm deg}_G(v_{r-1})> {\rm deg}_G(v_r)$. Hence,
$$\displaystyle\begin{array}{lll}
1+\displaystyle\sum_{i=1}^{r-1}{{\rm deg}_G(v_i)} &    = & |E(G^R)|+ |E(G^B)| \\
         & \leq & \displaystyle\sum_j\sum_{x\in S^j} {{\rm deg}_{G^j}(x)}+
                  \displaystyle\sum_j\sum_{x\in X^j}{{\rm deg}_{G^j}(x)}\\
         & \leq  &\displaystyle\sum_{i=1}^{r-1}{{\rm deg}_G(v_i)}
\end{array}$$
which is a contradiction. This means that
${\rm ex}_{alt}(G,rK_2,\sigma)\leq \displaystyle\sum_{i=1}^{r-1}{\rm deg}_G(v_i)$.
Accordingly,  $\chi({\rm KG}(G,rK_2))= |E(G)|-\displaystyle\sum_{i=1}^{r-1}{\rm deg}_G(v_i)$.

\item[b{\rm )}] Now, we show that if
$r\leq {{\rm deg}_G(v_{r-1})+1\over 4}$,  then the assertion holds.
For $j\in\{R,B\}$, if $G^j$ does~not have any $r$-matching, in view of
the Tutte-Berge Formula,
there exists an $S^j\subseteq V(G^j)$ such that $|V(G^j)|-o(G^j-S^j)+|S^j|\leq 2r-2$.
Assume that $O^j_1,O^j_2,\ldots,O^j_{t_j}$ are all components of $G^j-S^j$, where ${t_j}\geq o(G^j-S^j)$.
We consider two different cases $s=0$ and $s>0$.
First assume that $s=0$ and so $t=2+\displaystyle \sum_{i=1}^{r-1}{{\rm deg}_G(v_i)}$.
Now, we show that each of $G^R$ and $G^B$ has a matching of size $r$.
Assume that $G^R$ (resp. $G^B$) does~not have any matching of size $r$.
Therefore,
$$\begin{array}{lll}
|E(G^R)| & \leq & \displaystyle\sum_{x\in S^R}{\rm deg}_{G^R}(x)+\displaystyle\sum_{i=1}^{t_R} {|V(O^R_i)|\choose 2}\\
              & \leq & \displaystyle\sum_{x\in S^R}{\rm deg}_{G^R}(x)+{\displaystyle\sum_{i=1}^{t_R}|V(O^R_i)|-({t_R}-1)\choose 2}\\
              & \leq & \displaystyle\sum_{x\in S^R}{\rm deg}_{G^R}(x)+{2r-2|S^R|-1\choose 2}\\
              & \leq & {1\over 2}\displaystyle\sum_{i=1}^{r-1}{\rm deg}_{G}(v_i)
\end{array}$$
which is impossible. Now, assume that $s\not=0$. We show that $G^R$ or $G^B$ has a matching of size $r$.
On the contrary, suppose that both $G^R$ and $G^B$ does not have any matching of size $r$.
First, suppose that  $|S^R|\not=r-1$ or $|S^B|\not=r-1$.
Note that ${1+\displaystyle\sum_{i=1}^{r-1}{{\rm deg}_G(v_i)}}= t=|E(G^R)|+|E(G^B)|$. On the other hand,
$$\begin{array}{lll}
     t   & \leq & \displaystyle\sum_j\displaystyle\sum_{x\in S^j}  {{\rm deg}_{G^j}(x)}
                        +\displaystyle\sum_j\sum_{i=1}^{t_j} {|V(O^j_i)|\choose 2}\\
        & \leq & \displaystyle\sum_j\displaystyle\sum_{x\in S^j}  {{\rm deg}_{G^j}(x)}+
                           \displaystyle\sum_j{\displaystyle\sum_{i=1}^{t_j}|V(O^j_i)|-({t_j}-1)\choose 2}\\
        & \leq & \displaystyle\sum_j\sum_{i=1}^{|S^j|}  {{\rm deg}_G(v_i)\over 2}+\min\{{r-1\over2},{|S^R|+|S^B|\over 2}\} + \sum_j{2r-2|S^j|-1\choose 2}\\
        & \leq & \displaystyle\sum_j\sum_{i=1}^{|S^j|} {{\rm deg}_G(v_i)\over 2}+\sum_j(r-|S^j|-1){{\rm deg}_G(v_{r-1})\over 2}\\
        & \leq &  \displaystyle\sum_{i=1}^{r-1} {{\rm deg}_G(v_i)}
\end{array}$$
which is impossible. If $|S^R|=|S^B|=r-1$, then each connected component of $G^j-S^j$ is a single vertex.
Hence,
$$ {1+\displaystyle\sum_{i=1}^{r-1}{{\rm deg}_G(v_i)}}\leq |E(G^R)|+|E(G^B)|\leq \displaystyle\sum_j\displaystyle\sum_{x\in S^j}  {{\rm deg}_{G^j}(x)}
\leq \displaystyle\sum_{i=1}^{r-1}  {{\rm deg}_G(v_i)},$$
\end{enumerate}
a contradiction. Consequently, ${\rm ex}_{alt}(G,rK_2,\sigma)\leq \displaystyle\sum_{i=1}^{r-1}{\rm deg}_G(v_i)$; and
accordingly,  $\chi({\rm KG}(G,rK_2))= |E(G)|-\displaystyle\sum_{i=1}^{r-1}{\rm deg}_G(v_i)$.
}\end{proof}
Note that ${\rm KG}(C_n,rK_2)\cong {\rm SG}(n,r)$. Hence, the aforementioned theorem
can be considered as a generalization of Schrijver's Theorem \cite{MR512648}.

\begin{alphthm}{\rm \cite{MR512648}}
For any positive integers $n$ and $r$, where $n\geq 2r$, we have $\chi({\rm SG}(n,r))=n-2r+2$.
\end{alphthm}

\begin{cor}
Let $G$ be  a  connected non-bipartite $k$-regular graph with odd girth at least $g$, where $k$ is an even integer.
For any positive integer $r\leq {g\over 2}$, we have $\chi({\rm KG}(G,rK_2))=|E(G)|-k(r-1)$.
\end{cor}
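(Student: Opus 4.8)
The plan is to derive this corollary as a direct special case of Theorem~\ref{grk2}, so the whole task reduces to checking that a $k$-regular graph meets that theorem's hypotheses. Since $G$ is $k$-regular, every vertex has degree $k$, the degree sequence is constant, and \emph{any} labeling $v_1,\ldots,v_n$ of $V(G)$ is a valid non-increasing ordering. In particular $\sum_{i=1}^{r-1}{\rm deg}_G(v_i)=(r-1)k=k(r-1)$, which is exactly the quantity in the desired conclusion, and ${\rm deg}_G(v_{r-1})=k$ is even by hypothesis, so the parity requirement of Theorem~\ref{grk2} holds automatically. The assumption $r\le g/2$ is given and immediately implies $r\le\max\{{g\over 2},{{\rm deg}_G(v_{r-1})+1\over 4}\}$. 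Thus the only hypothesis left to secure is that the first $r-1$ vertices of the chosen ordering form an independent set; equivalently, I must exhibit an independent set of size $r-1$ in $G$ and then relabel so that its vertices become $v_1,\ldots,v_{r-1}$.

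To produce such a set I would exploit the odd girth bound. Because $G$ is connected and non-bipartite it contains an odd cycle, and a \emph{shortest} odd cycle $C$ has length $\ell$ equal to the odd girth, so $\ell\ge g\ge 2r$. The key point is that a shortest odd cycle is chordless: a chord would split $C$ into two shorter cycles whose lengths sum to $\ell+2$, forcing one of them to be an odd cycle of length at most $\ell-1$, which contradicts minimality. Hence $C$ is an induced cycle, and any collection of pairwise non-consecutive vertices of $C$ is independent not merely within $C$ but within $G$ itself. As $\ell$ is odd with $\ell\ge 2r$, we have $\ell\ge 2r+1$, so $C$ carries an independent set of size $(\ell-1)/2\ge r\ge r-1$. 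Selecting $r-1$ of these vertices and relabeling them $v_1,\ldots,v_{r-1}$ furnishes the required independent set while leaving the (trivially valid) degree ordering intact.

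With every hypothesis verified, Theorem~\ref{grk2} yields $\chi({\rm KG}(G,rK_2))=|E(G)|-\sum_{i=1}^{r-1}{\rm deg}_G(v_i)=|E(G)|-k(r-1)$, as claimed. One boundary case needs a separate word, since Theorem~\ref{grk2} assumes $r\ge 2$: when $r=1$, an $r$-matching is a single edge, and two single edges are edge-disjoint precisely when they differ, so ${\rm KG}(G,K_2)$ is the complete graph on $|E(G)|$ vertices, giving $\chi=|E(G)|=|E(G)|-k(r-1)$ in agreement with the formula. The only genuinely non-formal step is the chordlessness argument ensuring that the independent set extracted from the shortest odd cycle stays independent in $G$; everything else is bookkeeping that uses regularity to make the degree-ordering and parity conditions automatic.
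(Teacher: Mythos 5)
Your proposal is correct and follows essentially the same route as the paper: both extract a shortest odd cycle, observe it is chordless (hence induced), take $r-1$ pairwise non-consecutive vertices of it as the independent set, and then invoke Theorem~\ref{grk2}, with regularity making the degree-ordering and parity hypotheses automatic. Your treatment is somewhat more careful than the paper's, since you spell out the chordlessness argument and separately handle the boundary case $r=1$ (where ${\rm KG}(G,K_2)$ is complete), which the paper's two-line proof passes over silently.
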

\begin{proof}{
Let $C$ be a minimal odd cycle in $G$.
Note that $C$ is an induced subgraph of $G$ and $|V(C)|\geq g$. Consequently, it contains an independent set of
size ${\lfloor{g\over 2}\rfloor}$. Therefore, in view of Theorem~\ref{grk2}, the proof is completed.
}\end{proof}
\subsection{Matching-Dense Graphs}\label{subsecdense}
In this subsection, we determine the chromatic number of matching-dense graphs.

Let $G$ be a graph with $V(G)=\{u_1,\ldots,u_n\}$.
The graph $G$ is termed $(r,c)$-{\it locally Eulerian},
if there are edge-disjoint nontrivial Eulerian connected subgraphs $H_1,\ldots,H_n$ of $G$ such that
for any $1\leq i\leq n$,  we have
$u_i\in H_i$ and that for any $u\in V(H_i)$, where $u\not =u_i$, we have
${\rm deg}_{H_i}(u_i) \geq (r-1){\rm deg}_{H_i}(u)+c$.

\begin{lem}\label{matchingdense}
Let $r\geq 2$ and $s$ be nonnegative integers.
Also, let $G$  be a graph with $n$ vertices and $\delta(G) > {r+2\choose 2}+(r-2)s$. If there exists an
$(r+s,c)$-locally Eulerian graph $H$ such that $G$ is a subgraph of $H$, $|V(H)|= |V(G)|+s$, and
$c\geq {r-1\choose 2}+(s+3)(r-1)$, then
$\chi({\rm KG}(G,rK_2))=\zeta({\rm KG}(G,rK_2))=|E(G)|-{\rm ex}(G,rK_2)$.
\end{lem}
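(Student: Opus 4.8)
The plan is to squeeze both $\chi({\rm KG}(G,rK_2))$ and $\zeta({\rm KG}(G,rK_2))$ between the two bounds furnished by Lemma~\ref{lowerupper}. The upper bound $\chi({\rm KG}(G,rK_2))\leq |E(G)|-{\rm ex}(G,rK_2)$ is immediate from that lemma. For the lower bound, recall from the proof of Lemma~\ref{lowerupper} that if ${\cal H}$ is the hypergraph on vertex set $E(G)$ whose hyperedges are the copies of $rK_2$ in $G$, then ${\rm KG}({\cal H})\cong {\rm KG}(G,rK_2)$ and $alt({\cal H})={\rm ex}_{alt}(G,rK_2)$; hence, taking this particular ${\cal H}$ in the definition of the altermatic number, $\zeta({\rm KG}(G,rK_2))\geq |E(G)|-{\rm ex}_{alt}(G,rK_2)$, and $\chi\geq\zeta$ by Theorem~\ref{lowermain}. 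Since ${\rm ex}(G,rK_2)\leq {\rm ex}_{alt}(G,rK_2)$ always holds, it suffices to exhibit one ordering $\sigma\in L_{E(G)}$ for which ${\rm ex}_{alt}(G,rK_2,\sigma)\leq {\rm ex}(G,rK_2)$; this forces all three quantities to equal $|E(G)|-{\rm ex}(G,rK_2)$.

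The locally Eulerian hypothesis enters in the construction of $\sigma$. Writing $H_1,\ldots,H_{n+s}$ for the edge-disjoint nontrivial Eulerian connected subgraphs witnessing that $H$ is $(r+s,c)$-locally Eulerian, I would fix a closed Eulerian tour of each $H_i$, concatenate these tours into one ordering of $\bigcup_i E(H_i)$ (appending any leftover edges of $H$ arbitrarily), and then delete every edge of $H$ not lying in $G$ to obtain $\sigma$. The purpose of this construction is a balancedness property mirroring the Eulerian-tour trick of Theorem~\ref{grk2}: within the tour of $H_i$ the two edges used at each visit to a vertex $x$ are consecutive in the tour, hence stay $\sigma$-consecutive after deletion whenever both belong to $G$, so an alternating coloring must give them opposite colors. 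Consequently, for every vertex $x$ and every color $j$, the number of edges of $G\cap H_i$ incident to $x$ receiving color $j$ is at most $\lceil {\rm deg}_{H_i}(x)/2\rceil$. In particular each center $w_i$ continues to dominate its neighbors in each color class because ${\rm deg}_{H_i}(w_i)\geq (r+s-1){\rm deg}_{H_i}(w)+c$.

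With $\sigma$ fixed, I would argue by contradiction in the style of Theorem~\ref{grk2}(b). Suppose a subset of $E(G)$ of size $t={\rm ex}(G,rK_2)+1$ is colored alternatingly with respect to $\sigma$ so that both $G^R$ and $G^B$ are $rK_2$-free. Applying the Tutte--Berge formula to each color class produces sets $S^R,S^B\subseteq V(G)$ with $|V(G)|-o(G^j-S^j)+|S^j|\leq 2r-2$, and the usual convexity estimate bounds the edges inside the components of $G^j-S^j$ by $\binom{2r-1-2|S^j|}{2}$. Combining this with $|E(G^j)|\leq \sum_{x\in S^j}{\rm deg}_{G^j}(x)+\binom{2r-1-2|S^j|}{2}$ and then using the balancedness and the domination inequality from the previous step, one sums over $j\in\{R,B\}$ to obtain $t=|E(G^R)|+|E(G^B)|\leq {\rm ex}(G,rK_2)$, a contradiction. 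The hypotheses $\delta(G)>\binom{r+2}{2}+(r-2)s$ and $c\geq \binom{r-1}{2}+(s+3)(r-1)$ are precisely what is needed to absorb the ceiling error terms and the contribution of the $s$ extra vertices of $H$ into these inequalities.

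The main obstacle, and where I expect the genuine work to concentrate, is the degree bookkeeping in this last step. Each vertex of $G$ is the center of exactly one $H_i$ but a non-central vertex of several others, and $G$ is only a subgraph of $H$, so the balancedness estimate and the domination inequality must be assembled across all the tours while correctly discounting the edges of $H\setminus G$ and the $s$ dummy vertices of $V(H)\setminus V(G)$. Concretely, I would need to verify that the Tutte--Berge sets $S^R,S^B$ can be taken to be controlled by high-degree centers, so that the domination parameter $r+s-1$ together with the additive constant $c$ converts the per-vertex balancedness into the global bound $t\leq {\rm ex}(G,rK_2)$. Pinning down the extremal configuration realizing ${\rm ex}(G,rK_2)$ --- namely the edges incident to the $r-1$ centers of largest degree --- is the crux that makes the final inequality tight and completes the squeeze.
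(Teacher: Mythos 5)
Your skeleton is the same as the paper's (squeeze via Lemma~\ref{lowerupper}, an ordering $\sigma$ built from Eulerian tours of the $H_i$, then a Tutte--Berge contradiction), but the ordering you construct does not do the job. You concatenate closed tours of the $H_i$ and ``append any leftover edges of $H$ arbitrarily.'' The edges outside $\bigcup_i E(H_i)$ are in general the vast majority of $E(G)$: the $H_i$ are only required to be nontrivial Eulerian subgraphs, and indeed in Lemma~\ref{locallyeulerian} each $H_v$ satisfies ${\rm deg}_{H_v}(v)=2\lfloor (t-3)/8\rfloor$, roughly a quarter of ${\rm deg}_G(v)$. For arbitrarily ordered leftover edges there is no pairing structure: an alternating coloring can interleave them so that essentially \emph{all} leftover edges at a fixed vertex receive the same color (alternate them against edges not incident to that vertex). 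Hence your per-vertex balance property holds only inside $G\cap\bigcup_i H_i$, the color-degree of a vertex in $G$ can be close to ${\rm deg}_G(u)$ rather than $\tfrac12({\rm deg}_G(u)+s+3)$, and the Tutte--Berge counting never reaches a contradiction. The paper avoids exactly this by building a \emph{single} Eulerian tour of an augmented graph $\bar H$: an apex $x$ joined to every vertex by two parallel edges $f_i,f_i'$ (plus a parity vertex $z$), with each Eulerian component $K_j$ of $K\setminus\bigcup_i H_i$ spliced into the tour immediately after the tour of an $H_i$ meeting it. Every edge of $G$ then lies in some visit-pair of the tour, which is what gives the uniform bound $\tfrac12({\rm deg}_G(u)+s+3)$ on each color-degree at every vertex.

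The second gap is in the concluding count, which you defer to ``degree bookkeeping.'' Summing the Tutte--Berge bound over $j\in\{R,B\}$ works only when $|T^R|\leq r-2$ or $|T^B|\leq r-2$ (there $\delta(G)>\binom{r+2}{2}+(r-2)s$ forces $|E(G^R)|<\tfrac12{\rm ex}(G,rK_2)$). In the remaining case $|T^R|=|T^B|=r-1$ all components of $G^j-T^j$ are isolated vertices, and the double count gives only $|E(G^R)|+|E(G^B)|\leq \sum_{j}\sum_{x\in T^j}\tfrac12({\rm deg}_G(x)+s+3)$, which overshoots ${\rm ex}(G,rK_2)$ by about $\binom{r-1}{2}+(r-1)(s+3)$ even if $T^R=T^B$; so no generic summation can close this case. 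The paper's resolution has two steps you are missing: first prove $T^R=T^B$ --- this is where the block structure (all of $E(H_i)$ consecutive in $\sigma$) and the domination inequality with $c\geq\binom{r-1}{2}+(s+3)(r-1)$ are actually used, showing that a vertex $u_i\in T^R\setminus T^B$ would have few blue edges in $H_i$, hence few red edges at $u_i$, hence $|E(G^R)|\leq\tfrac12{\rm ex}(G,rK_2)$, a contradiction --- and then count the colored edges \emph{once}, not per color: since $T^R=T^B$ is a vertex cover of both color classes of size $r-1$, all colored edges together number at most ${\rm ex}(G,rK_2)$. Finally, the idea you propose for the crux --- that the extremal $rK_2$-free subgraph is the set of edges incident to the $r-1$ vertices of largest degree --- is neither available nor needed; the argument only uses the two inequalities ${\rm ex}(G,rK_2)\geq\sum_{j=1}^{r-1}{\rm deg}_G(u_{i_j})-\binom{r-1}{2}$ and that any edge set covered by $r-1$ vertices has size at most ${\rm ex}(G,rK_2)$.
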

\begin{proof}{
In view of Lemma~\ref{lowerupper}, it is sufficient to show that there exists an
ordering $\sigma$ of the edges of $G$ such that
${\rm ex}_{alt}(G,rK_2, \sigma)={\rm ex}(G,rK_2)$.

Let $V(G)=\{u_1,\ldots,u_n\}$ and $V(H)=\{u_1,\ldots, u_{n+s}\}$. In view of definition of  $(r,c)$-locally Eulerian graph,
there are pairwise edge-disjoint nontrivial Eulerian subgraphs $H_1,\ldots,H_{n+s}$ of $H$ such that for any $1\leq i\leq n+s$, we have
$u_i\in H_i$ and that for any $u\in V(H_i)$, where $u\not =u_i$, we have ${\rm deg}_{H_i}(u_i) \geq (r+s-1){\rm deg}_{H_i}(u)+{r-1\choose 2}+(s+3)(r-1)$.

To find the ordering $\sigma$, add a new vertex $x$ and join it to all vertices of $H$ by edges with multiplicity two to obtain the graph  $H'$. Precisely, for any $1\leq i \leq n+s$, join $x$ and $u_i$ with two distinct edges $f_i$ and $f'_i$. Now,
if $H'$ has no odd vertices, then set $\bar{H}=H'$; otherwise,
add a new vertex $z$ to $H'$ and join it to all odd vertices of $H'$ to obtain the graph $\bar{H}$.
 The graph $\bar{H}$ is an even graph; and therefore, it has an Eulerian tour.

 Also, note that the graph $K=\bar{H}-x$ is an even graph; and accordingly, any connected component of
 $K'=K\setminus\left(\displaystyle\bigcup_{i=1}^{n+s} H_i\right)$ is also an Eulerian subgraph. Without loss of generality,
assume that $K_{1},\ldots, K_{l}$ are the connected components of $K'$.
Construct an Eulerian tour for  $\bar{H}$ as follows.
At $i^{\rm th}$ step, where $1\leq i\leq n+s$,
start from the vertex $x$ and traverse the edge $f_i$. Consider an arbitrary Eulerian tour of $H_i$
starting at $u_i$ and traverse it. Next, if there exists a $1\leq j\leq l$ such that $u_i\in K_{j}$ and the edge set of
$K_{j}$ is still untraversed, then consider an Eulerian tour for $K_{j}$ starting at $u_i$ and traverse it. Next,
traverse the edge $f'_i$ and if $i<n+s$, then start $(i+1)^{\rm th}$ step.
Construct an ordering $\sigma$ for the edge set of the graph $G$
such that the ordering of edges in $E(G)$ are
corresponding to their ordering in the aforementioned Eulerian tour, i.e.,
if we traverse the edge $e_i\in E(G)$ before the edge
$e_j\in E(G)$ in the Eulerian tour, then in the ordering $\sigma$ we have $e_i < e_j$.

Now, we  claim that ${\rm ex}_{alt}(G,rK_2, \sigma)={\rm ex}(G,rK_2)$.
Note that for any $r-1$ vertices $\{u_{i_1},\ldots, u_{i_{r-1}}\}\subseteq V(G)$, we have
$${\rm ex}(G,rK_2) \geq \displaystyle\sum_{j=1}^{r-1}{\rm deg}_G(u_{i_j})-{r-1 \choose 2}.$$
Consider an alternating $2$-coloring of the edges of $G$ with
respect to the ordering $\sigma$
of length ${\rm ex}(G,rK_2)+1$, i.e., we assigned  ${\rm ex}(G,rK_2)+1$ blue and red colors alternatively to the edges of $G$
with respect  to the ordering $\sigma$.
For a contradiction, suppose that both red spanning subgraph $G^{R}$ and blue spanning subgraph $G^{B}$
are $rK_2$-free subgraphs.
In view of the Berge-Tutte Formula, there are two sets $T^{R}$ and $T^{B}$ such that
$|V(G^{R})|-o(G^{R}-T^{R})+|T^{R}|\leq 2r-2$ and $|V(G^{B})|-o(G^{B}-T^{B})+|T^{B}|\leq 2r-2$.
In view of these inequalities, one can see that $|T^{R}|\leq r-1$ and $|T^{B}|\leq r-1$. Moreover,
the number of edges of $G^{R}$ not~incident to some vertex of $T^{R}$ ($|E(G^{R}- T^{R})|$)
is at most ${2r-2|T^{R}|-1 \choose 2}$. To see this,
assume that $O^R_1,O^R_2,\ldots,O^R_{t_R}$ are all connected components of $G^R-T^R$,
where ${t_R}\geq o(G^R-T^R)\geq |V(G^{R})|+|T^{R}|-2r+2$. We have
$$\begin{array}{ccc}
|E(G^{R}- T^{R})| \leq  \displaystyle\sum_{i=1}^{t_R} {|V(O^R_i)|\choose 2}& \leq &  \displaystyle{\displaystyle\sum_{i=1}^{t_R}|V(O^R_i)|-({t_R}-1)\choose 2}\\
& & \\
&  \leq & {2r-2|T^R|-1\choose 2}.
\end{array}$$
Similarly, we show $|E(G^{B}- T^{B})| \leq   {2r-2|T^{B}|-1 \choose 2}$.  Also, in view of the ordering $\sigma$, any vertex $u$
of $G$ is incident to at most ${1\over 2}({\rm deg}_G(u)+s+3)$ edges of $G^{R}$ (resp. $G^{B}$).
First, we show that if $|T^R|\leq r-2 $ or $|T^B|\leq r-2$, then the assertion holds. If $|T^R|\leq r-2$, then
$$\begin{array}{lll}
|E(G^R)| & \leq & \displaystyle {2r-2|T^{R}|-1 \choose 2}+\sum_{u\in T^R}{1\over 2}({\rm deg}_G(u)+s+3)\\
&&\\
       & \leq & {2r-2|T^{R}|-1 \choose 2}+{{\rm ex}(G,rK_2)\over 2}-{r-1-|T^{R}|\over 2}\delta(G) +{1\over 2}{r-1 \choose 2}+{(s+3)|T^{R}|\over 2}\\
                &&\\
       &   <  & {{\rm ex}(G,rK_2)\over 2}.
\end{array}$$
Since $|E(G^B)|\leq |E(G^R)|+1$, we have ${\rm ex}_{alt}(G,rK_2, \sigma)=|E(G^R)|+|E(G^B)|\leq {\rm ex}(G,rK_2)$ which is impossible.
Similarly, if $|T^B|\leq r-2$, then the assertion holds.

If $|T^R|=|T^B|=r-1$, in view of  the Berge-Tutte Formula, one can conclude that
all connected components of $G^{R}-T^{R}$ and $G^{B}-T^{B}$ are isolated vertices. This means that
$T^R$ and $T^B$ are vertex covers of $G^R$ and $G^B$, respectively.
Now, we show that $T^{R}=T^{B}$. One the contrary, suppose that $T^{R}\not =T^{B}$.
Without loss of generality, let $|E(G^{R})| \geq |E(G^{B})|$ and choose a vertex $u_i\in T^{R}\setminus T^{B}$.

Set $L^B=|E(H_i^B)|$, i.e., the number of blue edges of $H_i$.
Since $T^B$ is a vertex cover of $G^B$ and all edges of $H_i$ appear consecutively in the ordering $\sigma$  (according to an Eulerian tour of $H_i$), one can check that at least $2L^B$ edges of  $H_i$ are incident to the vertices of $T^B$.
It implies that there is a vertex $z\in V(H_i)\setminus\{u_i\}$ with degree at least ${2L^B\over r-1}$.
However, by the assumption that for any $u\in V(H_i)\setminus \{u_i\}$,
$${\rm deg}_{H_i}(u_i) \geq (r+s-1){\rm deg}_{H_i}(u)+{r-1\choose 2}+(s+3)(r-1),$$
we have
$${\rm deg}_{H_i}(u_i)\geq(r+s-1)\left\lceil{2L^B\over r-1}\right\rceil+{r-1\choose 2}+(s+3)(r-1).$$
Thus, we have
$$L^B\leq{1\over 2}\left({\rm deg}_{H_i}(u_i)-{r-1\choose 2}-(s+3)(r-1)\right).$$

Note that red color can be assigned to at most  one edge between any two consecutive edge of $G$ in the ordering $\sigma$.
Define $l^R$ to be the number of red edges incident to $u_i$ in the subgraph $H_i$.
One can see that $l^R\leq L^B+1$.  If there exists a $1\leq j\leq l$ such that $u_i\in K_{j}$ and $u_r\not \in K_j$ for any $r<i$, then
set $H'_i=H_i\cup K_j$; otherwise, define $H'_i=H_i$.
Now, we show that the number of red edges incident to $u_i$ in the graph $H'_i$ is at most $L^B+1+{1\over 2}{\rm deg}_{H'_{i}}(u_i)-{1\over 2}{\rm deg}_{H_{i}}(u_i)$. If $H'_i=H_i$, then there is nothing to prove. Otherwise, one can check that
red color can be assigned to at most ${1\over 2}{\rm deg}_{H'_{i}}(u_i)-{1\over 2}{\rm deg}_{H_{i}}(u_i)+1$ edges incident to $u_i$ in the subgraph $H'_i\setminus H_i$. Moreover, if $l^R= L^B+1$, then one can see that there are two red edges $e$ and $e'$ in $H_i$
such that $e$ (resp. $e'$) appears before  (resp. after) any edge of $H_i^B$  in the ordering $\sigma$. Consequently, in this case,
red color can be assigned to at most ${1\over 2}{\rm deg}_{H'_{i}}(u_i)-{1\over 2}{\rm deg}_{H_{i}}(u_i)$ edges incident to $u_i$ in the subgraph $H'_i\setminus H_i$. Thus, the number of red edges incident to $u_i$ in the graph $H'_i$ is at most $L^B+1+{1\over 2}{\rm deg}_{H'_{i}}(u_i)-{1\over 2}{\rm deg}_{H_{i}}(u_i)$. Moreover, in view of the ordering of the Eulerian tour of $\bar{H}$, one can conclude that red color can be assigned to at most ${1\over 2}({\rm deg}_G(u_i)+s+1-{\rm deg}_{H'_i}(u_i))$ edges incident to $u_i$ in the subgraph $G\setminus H'_i$. Therefore, the number of  red edges incident to $u_i$ in the graph $G$ is at most
$$\begin{array}{lll}
 L^B+1+{1\over 2}{\rm deg}_{H'_{i}}(u_i)-{1\over 2}{\rm deg}_{H_{i}}(u_i)+ {1\over 2}({\rm deg}_G(u_i)+s+1-{\rm deg}_{H'_i}(u_i)) & \leq & \\
 {1\over 2}({\rm deg}_G(u_i)-(s+3)(r-2)-{r-1\choose 2}) &  &
\end{array}$$
Consequently,
$$\begin{array}{lll}
|E(G^R)|  & \leq &{1\over 2}({\rm deg}_G(u_i)-(s+3)(r-2)-{r-1\choose 2})+ \displaystyle\sum_{u\in T^R\setminus\{u_i\}}{1\over 2}({\rm deg}_G(u)+s+3)\\
                             & & \\
          & \leq & -{1\over 2}{r-1\choose 2}+\displaystyle\sum_{u\in T^R}{1\over 2}{\rm deg}_G(u)\\
\                    & & \\
          & \leq & {1\over 2}{\rm ex}(G,rK_2)
\end{array}$$
and it is a contradiction.
Hence, $T^{R}=T^{B}$. Therefore,
the number of blue and red edges of $G$, i.e., $|E(G^R)|+|E(G^B)|$, is at most the number of edges incident  with vertices in $T^R$
or $T^B$ which is at most ${\rm ex}(G,rK_2)$ and this is impossible.
Accordingly, ${\rm ex}_{alt}(G,rK_2, \sigma)={\rm ex}(G,rK_2)$.
}\end{proof}
Let $G$ be a graph.
A $G$-decomposition of a graph $H$ is a set $\{G_1,\ldots,G_t\}$ of pairwise edge-disjoint subgraphs of $H$ such that for each
$1\leq i\leq t$, the graph $G_i$ is isomorphic to $G$; and moreover, the edge sets of $G_i$'s partition the edge set of $H$.
A $G$-decomposition of $H$ is called a {\it monogamous $G$-decomposition},
if any distinct pair of vertices of $H$ appear in at most one copy of $G$ in the decomposition. Note that if a
graph $H$ has a decomposition into the complete graph $K_t$, then it is a monogamous $K_t$-decomposition.
\begin{alphthm}{\rm \cite{MR1723879}}\label{monogamous}
Let $m$ and $n$ be positive even integers. The complete bipartite graph $K_{m,n}$ has a  monogamous $C_4$-decomposition if and only if $(m,n)=(2,2)$ or $6\leq n\leq m\leq 2n-2$.
\end{alphthm}

\begin{lem}\label{locallyeulerian}
Let $r, t$ and $t'$ be positive integers, where $11\leq t\leq t'\leq 2t-2$.
If $c$ is a nonnegative integer and $t\geq 8r+4c+2$,
then the complete bipartite graph $K_{t,t'}$ is $(r,c)$-locally
Eulerian.
\end{lem}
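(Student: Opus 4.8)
The plan is to build each required Eulerian subgraph as a union of edge-disjoint $4$-cycles through one common vertex, harvested from a monogamous $C_4$-decomposition. Put $K=\lceil r-1+\tfrac{c}{2}\rceil$, so that the locally-Eulerian inequality ${\rm deg}_{H_v}(v)\geq (r-1){\rm deg}_{H_v}(u)+c$ will follow from ${\rm deg}_{H_v}(v)\geq 2K$; note that $t\geq 8r+4c+2$ forces $t\geq 8K+4$, which is the only numerical fact I shall use. The role of monogamy is this: if $\mathcal{D}$ is a monogamous $C_4$-decomposition and, for a vertex $v$, I let $H_v$ be the union of $k$ edge-disjoint cycles of $\mathcal{D}$ all through $v$, then $H_v$ is connected and all its degrees are even, hence it is a nontrivial Eulerian subgraph; furthermore any other vertex $u$ lies in at most one of these cycles, because the pair $\{u,v\}$ occurs in at most one cycle of $\mathcal{D}$, so ${\rm deg}_{H_v}(u)=2$ while ${\rm deg}_{H_v}(v)=2k$. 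Thus as soon as $k\geq K$ I obtain ${\rm deg}_{H_v}(v)=2k\geq 2K\geq 2(r-1)+c=(r-1){\rm deg}_{H_v}(u)+c$. Consequently it suffices to assign to every vertex at least $K$ cycles of $\mathcal{D}$ passing through it, each cycle handed to at most one vertex; the resulting subgraphs $H_v$ are then automatically pairwise edge-disjoint, and $K_{t,t'}$ is $(r,c)$-locally Eulerian.

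To produce $\mathcal{D}$ I first repair parity. Theorem~\ref{monogamous} requires both parts to be even, whereas $t$ and $t'$ may be odd, so I pass to $K_{P,Q}$ with $P=t+\varepsilon_1$ and $Q=t'+\varepsilon_2$ both even, where $\varepsilon_i\in\{0,1\}$ and I adjoin an auxiliary vertex to each odd side. Using $11\leq t\leq t'\leq 2t-2$, I would check in the four parity cases that $6\leq\min(P,Q)$ and $\max(P,Q)\leq 2\min(P,Q)-2$; the tight cases are $t$ odd with $t'=2t-2$ and $t$ even with $t'$ odd, which survive precisely because $t'$ odd forces $t'\leq 2t-3$. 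Theorem~\ref{monogamous} then supplies a monogamous $C_4$-decomposition $\mathcal{D}$ of $K_{P,Q}$. Since the at most two auxiliary vertices are not vertices of $K_{t,t'}$, I discard every cycle of $\mathcal{D}$ meeting them and keep only the \emph{real} cycles. A real vertex $v$ lies in ${\rm deg}(v)/2$ cycles of $\mathcal{D}$, of which at most two pass through an auxiliary vertex (at most one per auxiliary vertex, again by monogamy); hence $v$ lies in $D_v\geq \tfrac{t}{2}-2$ real cycles.

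It remains to find the assignment. I model it as a bipartite graph whose two sides are the real cycles of $\mathcal{D}$ and the real vertices, joining a cycle to each of the four vertices it contains, and I seek a subgraph in which every real vertex has degree at least $K$ and every cycle has degree at most $1$. Replacing each real vertex by $K$ copies, this is a matching saturating all the copies, which by Hall's theorem exists iff every set $S$ of real vertices is incident to at least $K|S|$ real cycles. I verify this by one double count: $\sum_{v\in S}D_v\geq |S|\big(\tfrac{t}{2}-2\big)$, while each cycle contains at most four vertices of $S$, so the number of real cycles meeting $S$ is at least $|S|\big(\tfrac{t}{2}-2\big)/4\geq K|S|$, the final inequality being exactly $t\geq 8K+4$. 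Handing each real vertex $K$ of its cycles and forming the $H_v$ as in the first paragraph finishes the argument.

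I expect the genuine difficulty to be organizational rather than conceptual. The Eulerian and domination properties are immediate once monogamy is invoked, and the Hall condition collapses to a single averaging estimate. The delicate part is the parity reduction: one must confirm that adjoining at most two auxiliary vertices keeps the dimensions inside the narrow band $\max\leq 2\min-2$ of Theorem~\ref{monogamous} in every parity case, and that deleting the cycles through those vertices costs each real vertex at most two cycles. These are exactly the steps where the hypotheses $t\leq t'\leq 2t-2$ and $t\geq 8r+4c+2$ are consumed.
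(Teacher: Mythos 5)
Your proposal is correct and follows essentially the same route as the paper's proof: parity-extend $K_{t,t'}$ to an even-sided $K_{P,Q}$, apply Theorem~\ref{monogamous} to obtain a monogamous $C_4$-decomposition, keep only the cycles lying entirely inside $K_{t,t'}$, and run Hall's theorem on a vertex-copies-versus-cycles bipartite graph to hand each vertex enough edge-disjoint $C_4$'s through it, with monogamy forcing every other vertex of each $H_v$ to have degree $2$. The only difference is bookkeeping: the paper assigns each vertex $\lfloor (t-3)/8\rfloor$ blocks rather than your $K=\lceil r-1+c/2\rceil$, which automatically avoids the degenerate case $r=1$, $c=0$ where your $K=0$ would yield empty (hence not nontrivial Eulerian) subgraphs; in your version one should take $K=\max\{1,\lceil r-1+c/2\rceil\}$, which your counting still supports since every vertex lies in at least four real cycles.
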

\begin{proof}{
Assume that $t=2p+q$ and $t'=2p'+q'$, where $0\leq q\leq 1$ and $0\leq q'\leq 1$. Extend the complete bipartite graph
$G=K_{t,t'}$ to the complete bipartite graph $H=K_{T,T'}$, where $T=t+q$ and $T'=t'+q'$.
In view of Theorem~\ref{monogamous},  consider a monogamous $C_4$-decomposition of $H$.
Call any $C_4$ of this decomposition a block, if it is entirely in $G$.
Construct a bipartite graph with the vertex set $(U,V)$ such that $U$ consists of
$\lfloor{t-3\over 8}\rfloor$ copies of each vertex of $K_{t,t'}$
and $V$ consists of all blocks. Join a vertex of $U$ to a vertex of $V$, if the corresponding vertex of
$K_{t,t'}$ is contained in the corresponding block.
One can check that the degree of each vertex in the part $U$ is at least $\lceil{t-3\over 2}\rceil$
and the degree of any vertex in the part $V$ is  $4\lfloor{t-3\over 8}\rfloor$.
In view of Hall's Theorem, one can see that this bipartite graph has a matching which saturates all vertices of $U$. For any
vertex $v\in K_{t,t'}$, consider $\lceil{t-3\over 8}\rceil$ blocks assigned to $v$ through the perfect matching and
set $H_v$ to be a subgraph of $K_{t,t'}$ formed by the union of these blocks.
One can see that ${\rm deg}_{H_v}(v)=2\lfloor{t-3\over 8}\rfloor$, while the degree of
any other vertices of $H_v$ is $2$. In view of $H_v$'s, one can see that  $K_{t,t'}$ is
an $(r,c)$-locally Eulerian graph.
}
\end{proof}
For a family ${\cal F}$ of graphs, we say a graph $G$ has an {\it ${\cal F}$-factor} if there are vertex-disjoint subgraphs $H_1,H_2,\ldots,H_t$
of $G$ such that each $H_i$ is a member of ${\cal F}$ and $\displaystyle\bigcup_{i=1}^t V(H_i)=V(G)$.
Note that if a graph $G$ has an  {\it ${\cal F}$-factor}, where each member of  ${\cal F}$ is an $(r,c)$-locally Eulerian
graph, then $G$ is also $(r,c)$-locally Eulerian. In view of the aforementioned lemma, if a graph $G$ has a $K_{t,t'}$ factor,
then one can conclude that $G$ is $(r,c)$-locally Eulerian. Now, we introduce some sufficient condition for a graph to have a $K_{t,t'}$ factor.

Graph expansion was studied extensively  in the literature.
Let $0<\nu\leq \tau<1$ and assume that $G$ is a graph with $n$ vertices. For $S\subseteq V(G)$, the {\it $\nu$-robust neighborhood of $S$}, $RN_{\nu,G}(S)$, is the set of all vertices  $v\in V(G)$ such that $|N_G(v)\cap S|\geq \nu n$.
A graph $G$ is called {\it robust $(\nu,\tau)$-expander}, if for any $S\subseteq V(G)$ with $\tau n\leq|S|\leq (1-\tau)n$ we have
$|RN_{\nu,G}(S)|\geq |S|+\nu n$. Throughout this section, we write $0<a\ll b\ll c$ to mean that we can choose the constants
$a$, $b$, and $c$ from right to left. More precisely, there are two increasing functions $f$ and $g$
such that, given $c$, whenever we choose some $b\leq g(c)$
and $a\leq f(b)$, for more about robust $(\nu,\tau)$-expander see~\cite{MR3002574}.
A graph $G$ with $n$ vertices has {\it bandwidth} at most $b$
if there exists a bijective assignment $l:V(G)\longrightarrow [n]$ such that for every edge $uv\in E(G)$,
we have $|l(u)-l(v)|\leq b$.
\begin{alphthm}\label{expander}{\rm \cite{MR3002574}}
Let $\nu, \tau$, and $\eta$ be real numbers, where $0<\nu\leq\tau\ll\eta<1$, and $\Delta$ be a positive integer.
There exist constants $\beta >0$ and $n_0$ such that the following holds.
Let  $H$ be a bipartite graph on $n\geq n_0$ vertices with $\Delta(H)\leq \Delta$
and bandwidth at most $\beta n$. If $G$ is a robust $(\nu,\tau)$-expander with $n$ vertices and $\delta(G)\geq \eta n$,
then $G$ contains a copy of $H$.
\end{alphthm}
\begin{thm}\label{expandercondition}
Let  $\nu, \tau$ and $\eta$ be real numbers, where $0<\nu\leq\tau\ll\eta<1$ .
If $n$ is sufficiently large, then for any robust $(\nu,\tau)$-expander graph $G$ with $n$ vertices and $\delta(G)\geq \eta n$,
$\chi({\rm KG}(G,rK_2))=\zeta({\rm KG}(G,rK_2))=|E(G)|-{\rm ex}(G,rK_2)$.
\end{thm}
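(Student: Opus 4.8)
The plan is to exhibit, for all sufficiently large $n$, a single structural property of $G$ that feeds directly into Lemma~\ref{matchingdense}: namely that $G$ itself is $(r,c)$-locally Eulerian for the constant $c=\binom{r-1}{2}+3(r-1)$, where $r\geq 2$ is the fixed integer in the statement. Once this is established, I would apply Lemma~\ref{matchingdense} with $s=0$ and $H=G$, so that $G$ is trivially a subgraph of $H$ with $|V(H)|=|V(G)|$. Its hypothesis $\delta(G)>\binom{r+2}{2}$ holds automatically for large $n$ since $\delta(G)\geq\eta n$ and $\eta$ is a fixed positive constant, and the requirement $c\geq\binom{r-1}{2}+3(r-1)$ is met with equality. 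This yields $\chi({\rm KG}(G,rK_2))=\zeta({\rm KG}(G,rK_2))=|E(G)|-{\rm ex}(G,rK_2)$. Thus the whole theorem reduces to producing the local-Eulerian structure.

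To produce it, I would build a spanning subgraph of $G$ that is an ${\cal F}$-factor whose blocks are all $(r,c)$-locally Eulerian, and then invoke the observation recorded before the statement that such a factor makes the ambient graph $(r,c)$-locally Eulerian. Set $t_0=8r+4c+2$; since $r\geq 2$ we have $t_0\geq 11$, so by Lemma~\ref{locallyeulerian} every complete bipartite graph $K_{t_0,t'}$ with $t_0\leq t'\leq 2t_0-2$ is $(r,c)$-locally Eulerian. I would take ${\cal F}=\{K_{t_0,t'}:t_0\leq t'\leq 2t_0-2\}$. The block sizes $t_0+t'$ then range over every integer in $[2t_0,3t_0-2]$, and a short counting argument shows that for all large $n$ one can write $n$ as a sum of integers drawn from this interval: the totals achievable with $k$ blocks fill the contiguous range $[2t_0k,(3t_0-2)k]$, and consecutive ranges overlap once $k\geq 3$, so every $n\geq 6t_0$ admits such a partition. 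Fixing such a partition defines a bipartite graph $F$ on $n$ vertices, the disjoint union of the corresponding complete bipartite blocks, which is exactly an ${\cal F}$-factor on an abstract vertex set.

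It remains to embed $F$ as a spanning subgraph of $G$, which is where Theorem~\ref{expander} enters. The graph $F$ is bipartite with $\Delta(F)\leq 2t_0-2$, a constant depending only on $r$; and by listing the blocks consecutively we obtain a labelling under which $F$ has bandwidth at most $3t_0-3$, again a fixed constant. Applying Theorem~\ref{expander} with $\Delta=2t_0-2$ produces constants $\beta>0$ and $n_0$; then for all $n$ large enough that $n\geq n_0$, that $\beta n\geq 3t_0-3$, that the partition above exists, and that $\eta n>\binom{r+2}{2}$, all hypotheses are met and $G$ contains a spanning copy of $F$. This copy is the required ${\cal F}$-factor, so $G$ is $(r,c)$-locally Eulerian, and the reduction of the first paragraph completes the proof.

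The argument is essentially an assembly of the three cited results, so the main obstacle is bookkeeping rather than a new idea. One must choose $t_0$, $\Delta$, $\beta$, and $n_0$ in a consistent order; the key point enabling this is that $\Delta$ depends only on $r$, so Theorem~\ref{expander} may legitimately be invoked before $n$ is fixed. One then verifies the number-theoretic partition of $n$ into admissible block sizes and confirms that the bandwidth requirement of Theorem~\ref{expander} and the minimum-degree requirement of Lemma~\ref{matchingdense} are simultaneously satisfiable for all sufficiently large $n$. The only genuinely content-bearing step is routing the deep expander embedding theorem into the language of ${\cal F}$-factors via the fixed bandwidth of a block-consecutive labelling.
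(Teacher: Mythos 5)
Your proposal is correct and follows essentially the same route as the paper's proof: both reduce the theorem to showing $G$ is $(r,c)$-locally Eulerian with $c=\binom{r-1}{2}+3(r-1)$, build a spanning disjoint union of complete bipartite blocks that are locally Eulerian by Lemma~\ref{locallyeulerian} (your $t_0=8r+4c+2$ equals the paper's $t=2r^2+14r-6$), embed it via Theorem~\ref{expander} using constant maximum degree and constant bandwidth, and conclude by the ${\cal F}$-factor observation and Lemma~\ref{matchingdense} with $s=0$. The only difference is cosmetic bookkeeping for divisibility: the paper uses uniform $K_{t,t}$ blocks plus a single balanced block $K_{\lceil t'/2\rceil,\lfloor t'/2\rfloor}$ absorbing the residue $n \bmod 2t$, whereas you allow variable block sizes $K_{t_0,t'}$ and a short interval-overlap counting argument.
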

\begin{proof}{
In view of Lemma~\ref{matchingdense}, it is sufficient to show that the graph $G$ is an
$(r, c)$-locally Eulerian graph, where $c={r-1\choose 2}+3(r-1)$. Set $t= 2r^2+14r-6$ and let $k$ and $t'$ be integers such that
$n =2tk + t'$ and $2t \leq  t' \leq 4t-1$. Now, set $H$ to be a bipartite graph on $n$ vertices
with $1+ {n-t' \over 2t}$ connected components such
that one component is isomorphic to  $K_{\lceil {t'\over 2}\rceil, \lfloor {t'\over 2} \rfloor}$
and any other component is
isomorphic to $K_{t,t}$. In view of Theorem~\ref{expander}, if $n$ is sufficiently large, then $H$ is a spanning subgraph of $G$.
By Lemma~\ref{locallyeulerian}, one can see that $K_{\lceil {t'\over 2}\rceil, \lfloor {t'\over 2} \rfloor}$ and
$K_{t,t}$ are both $(r, c)$-locally
Eulerian graph; consequently, by Lemma \ref{matchingdense}, the theorem follows.
}\end{proof}

The following lemma is an immediate consequence of Lemma 13 from \cite{MR2644240}.
\begin{alphlem}{\rm \cite{MR2644240}}
For positive constants $\tau\ll\eta<1$, there exists an integer $n_0$ such that
if $G$ is a graph with $n\geq n_0$ vertices and the
degree sequence $d_1\leq d_2\leq\cdots\leq d_n$
such that for any $i< {n\over 2}$,  $d_i\geq i+\eta n$ or $d_{n-i-\lfloor\eta n\rfloor}\geq n-i$,
then $\delta(G)\geq \eta n$ and $G$ is a robust $(\tau^2, \tau)$-expander.
\end{alphlem}
In view of the previous lemma and Theorem \ref{expandercondition}, we have the next corollary.
\begin{cor}
For a positive constant $\gamma$, there is an integer $n_0$ such that for any $n\geq n_0$
we have the followings.
If  $G$ is a connected graph with $n$ vertices and the degree sequence
$d_1\leq d_2\leq\cdots\leq d_n$ such that that
for each $i<{n\over 2}$, we have $d_i\geq i+\gamma n$ or $d_{n-i-\lfloor \gamma n\rfloor}\geq n-i$, then $\chi({\rm KG}(G,rK_2))=|E(G)|-{\rm ex}(G,rK_2)$.
\end{cor}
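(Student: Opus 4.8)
The plan is to feed the degree hypothesis into the lemma of~\cite{MR2644240} to manufacture a robust expander with large minimum degree, and then invoke Theorem~\ref{expandercondition}. The two results are engineered so that the conclusion of the former is precisely the hypothesis of the latter; hence the whole content of the proof is bookkeeping on the constants, and I expect no genuine obstacle.

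First I would set $\eta:=\gamma$, so that the degree condition in the statement becomes verbatim the hypothesis of the lemma of~\cite{MR2644240}: both impose, for each $i<{n\over 2}$, that $d_i\geq i+\eta n$ or $d_{n-i-\lfloor\eta n\rfloor}\geq n-i$. I would then fix $\tau>0$ small enough that the relation $0<\tau\ll\eta<1$ required by Theorem~\ref{expandercondition} holds, and put $\nu:=\tau^2$; since $\tau<1$ we automatically get $0<\nu\leq\tau$, so the full hierarchy $0<\nu\leq\tau\ll\eta<1$ demanded by Theorem~\ref{expandercondition} is in place. Finally I would take $n_0$ to be the larger of the threshold produced by the lemma of~\cite{MR2644240} and the (implicit) threshold hidden in the phrase \emph{``if $n$ is sufficiently large''} of Theorem~\ref{expandercondition}.

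With these choices, for every $n\geq n_0$ and every connected graph $G$ on $n$ vertices meeting the stated degree condition with constant $\gamma=\eta$, the lemma of~\cite{MR2644240} supplies both $\delta(G)\geq\eta n=\gamma n$ and the fact that $G$ is a robust $(\tau^2,\tau)$-expander, that is, a robust $(\nu,\tau)$-expander. These are exactly the hypotheses of Theorem~\ref{expandercondition}, whence $\chi({\rm KG}(G,rK_2))=|E(G)|-{\rm ex}(G,rK_2)$, which is the assertion. Note that the connectivity assumed in the statement is in fact forced by the expansion for large $n$, so it plays no active role beyond matching the hypotheses.

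The single point demanding attention is the order of quantification over the constants: Theorem~\ref{expandercondition} reads its hierarchy $0<\nu\leq\tau\ll\eta$ from right to left, so $\eta=\gamma$ must be chosen first and $\tau$ (and thereby $\nu=\tau^2$) afterwards, small relative to $\gamma$; reversing this order would break the $\tau\ll\eta$ relation and invalidate the appeal to Theorem~\ref{expandercondition}. Once this is respected, the corollary is an immediate concatenation of the two cited statements.
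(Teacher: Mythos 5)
Your proposal is correct and follows exactly the paper's route: the paper derives this corollary immediately by combining the degree-sequence lemma of~\cite{MR2644240} (which yields $\delta(G)\geq\gamma n$ and that $G$ is a robust $(\tau^2,\tau)$-expander) with Theorem~\ref{expandercondition}. Your additional care about the right-to-left order of choosing the constants $\eta=\gamma$, then $\tau$, then $\nu=\tau^2$ is precisely the bookkeeping the paper leaves implicit.
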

For a graph property ${\cal P}$, we say $G(n,p)$ possesses ${\cal P}$
{\it asymptotically almost sure}, or a.a.s. for brevity,
if the probability that $G\in G(n,p)$ possesses the property ${\cal P}$ tends to $1$
as $n$ tends to infinity.
For constants $0<\nu\ll\tau\ll p<1$, a.a.s. any graph $G$ in $G(n,p)$ is a robust $(\nu,\tau)$-expander
graph with minimum degree at least ${pn\over 2}$ and maximum degree at most $2np$.
This observation and Theorem~\ref{expandercondition} imply that a.a.s. for any graph $G$ in $G(n,p)$
we have $\chi({\rm KG}(G,rK_2))=|E(G)|-{\rm ex}(G,rK_2)$.
Moreover, Huang, Lee, and Sudakov \cite{MR2871764} proved
a more general theorem. Here, we state it in a special case.
\begin{alphthm}{\rm \cite{MR2871764}}
For positive integers $r,\Delta$ and reals $0<p\leq1$ and $\gamma>0$,
there exists a constant $\beta>0$ such that
a.a.s., any spanning subgraph $G'$ of any $G\in G(n,p)$ with minimum degree
$\delta(G')\geq p({1\over 2}+\gamma)n$ contains every
$n$-vertex bipartite graph $H$ which has the maximum degree at most $\Delta$ and bandwidth at most $\beta n$.
\end{alphthm}
In view of the proof of  Theorem \ref{expandercondition} and the previous theorem, we have
the following result.
\begin{cor}
If $0<p\leq 1$ and $\gamma>0$, then a.a.s. for any spanning subgraph
$G'$ of any $G\in G(n,p)$ with minimum degree at least $p({1\over 2}+\gamma)n$
we have $\chi({\rm KG}(G',rK_2))=|E(G')|-{\rm ex}(G',rK_2)$.
\end{cor}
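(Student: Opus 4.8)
The plan is to mimic the proof of Theorem~\ref{expandercondition}, replacing the robust-expander embedding result (Theorem~\ref{expander}) by the theorem of Huang, Lee, and Sudakov~\cite{MR2871764} stated above, which already applies directly to spanning subgraphs of members of $G(n,p)$. Fix $r$ and set $t=2r^2+14r-6$ and $c={r-1\choose 2}+3(r-1)$, exactly as in the proof of Theorem~\ref{expandercondition}; a direct computation shows $t=8r+4c+2$, so that Lemma~\ref{locallyeulerian} will apply to the blocks we use. As in that proof, write $n=2tk+t'$ with $2t\leq t'\leq 4t-1$, and let $H$ be the spanning bipartite graph on $n$ vertices consisting of $k$ vertex-disjoint copies of $K_{t,t}$ together with one copy of $K_{\lceil t'/2\rceil,\lfloor t'/2\rfloor}$.

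First I would record the two features of $H$ that feed into the embedding theorem. Since every connected component of $H$ has at most $t'\leq 4t-1$ vertices, the maximum degree of $H$ is bounded by the constant $\Delta=2t$, and by labelling the vertices component-by-component so that each component occupies a block of consecutive labels, the bandwidth of $H$ is at most $4t$; hence for $n$ large this bandwidth lies below $\beta n$ for any prescribed $\beta>0$. Applying the theorem of Huang, Lee, and Sudakov with the parameters $r$, $\Delta=2t$, $p$, and $\gamma$ produces a constant $\beta>0$ for which, a.a.s., every spanning subgraph $G'$ of every $G\in G(n,p)$ with $\delta(G')\geq p({1\over 2}+\gamma)n$ contains this $H$ as a spanning subgraph.

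Next I would transfer the locally Eulerian structure from $H$ to $G'$. By Lemma~\ref{locallyeulerian}, each of $K_{t,t}$ and $K_{\lceil t'/2\rceil,\lfloor t'/2\rfloor}$ is $(r,c)$-locally Eulerian; here one checks $11\leq\lfloor t'/2\rfloor$ and the required inequalities between the two parts, which all hold because $2t\leq t'\leq 4t-1$ and $t\geq 11$. Since $H$ is a vertex-disjoint union of these blocks, it is an ${\cal F}$-factor of $(r,c)$-locally Eulerian graphs, and therefore $H$ itself is $(r,c)$-locally Eulerian, as noted in the text preceding Theorem~\ref{expandercondition}. The witnessing edge-disjoint Eulerian subgraphs of $H$ are in particular edge-disjoint Eulerian subgraphs of the supergraph $G'\supseteq H$, with the degree inequalities measured internally and hence unchanged, so $G'$ is $(r,c)$-locally Eulerian as well.

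Finally I would invoke Lemma~\ref{matchingdense} with $s=0$, taking the required locally Eulerian graph to be $G'$ itself, so that $|V(G')|=n$. The hypothesis $c\geq{r-1\choose 2}+3(r-1)$ holds with equality, and the minimum-degree requirement $\delta(G')>{r+2\choose 2}$ is met for $n$ large, since $\delta(G')\geq p({1\over 2}+\gamma)n$ grows linearly in $n$ while ${r+2\choose 2}$ is a fixed constant. Lemma~\ref{matchingdense} then yields $\chi({\rm KG}(G',rK_2))=|E(G')|-{\rm ex}(G',rK_2)$, as claimed. I expect the only delicate points to be bookkeeping ones: confirming that both $\Delta(H)$ and the bandwidth of $H$ are genuine constants independent of $n$, so that a single $\beta$ from the embedding theorem works uniformly for all large $n$, and keeping straight the direction of containment — the locally Eulerian graph $H$ sits \emph{inside} $G'$, and it is exactly this inclusion that lets $G'$ inherit the locally Eulerian property needed to apply Lemma~\ref{matchingdense}.
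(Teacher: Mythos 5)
Your proposal is correct and takes essentially the same route the paper intends: the paper obtains this corollary precisely by rerunning the proof of Theorem~\ref{expandercondition} with the Knox--Treglown embedding theorem replaced by the Huang--Lee--Sudakov theorem, which is exactly what you do (the $K_{t,t}$ blocks plus one nearly balanced block, Lemma~\ref{locallyeulerian}, then Lemma~\ref{matchingdense} with $s=0$ applied to $G'$ itself). Your extra bookkeeping --- checking $t=8r+4c+2$, the constant degree and bandwidth bounds, and the direction of containment so that $G'$ inherits the locally Eulerian property from its spanning subgraph $H$ --- just makes explicit what the paper leaves implicit.
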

Let $H$ be a graph with $h$ vertices and $\chi(H)=l$.
Set ${\rm cr}(H)$ to be the size of the smallest color class
over all proper $l$-colorings of $H$.
The {\it critical chromatic number}, $\chi_{\rm cr}(H)$, is defined as $(l-1){h\over h-{\rm cr}(H)}$.
One can check that $\chi(H)-1<\chi_{{\rm cr}}(H)\leq \chi(H)$ and equality holds in the upper bound
if and only if in any $l$-coloring of $H$, all color classes have the same size.
Assume that $H$ has $k$ connected components $C_1,C_2,\ldots,C_k$.
Define ${\rm hcf}_c(H)$ to be the highest common factor of integers
$|C_1|,|C_2|,\ldots,|C_k|$. Let $f$ be an $l$-coloring of $H$ such that $x_1\leq x_2\leq\cdots\leq x_l$ are the size of coloring
classes in $f$. Set $D(f)=\{x_{i+1}-x_i|\ 1\leq i\leq l-1\}$
and $D(H)=\displaystyle\bigcup D(f)$ where the union ranges over all $l$-colorings $f$ of $H$.
Now, define ${\rm hcf}_\chi(H)$ to be the highest common factor of the members of $D(H)$.
If $D(H)=\{0\}$, then we define ${\rm hcf}_\chi(H)=\infty$.
We say
$$H\ {\rm is\ in}\ class\ 1\ {\rm if}\left
\{ \begin{array}{ll}
{\rm hcf}_\chi(H)=1                              & {\rm when}\ \chi(H)\neq 2,\\
{\rm hcf}_\chi(H)\leq 2\  {\rm and}\  {\rm hcf}_c(H)=1 & {\rm when}\ \chi(H)= 2,
\end{array}\right.$$
otherwise, $H$ is in {\it Class} $2$.
\begin{alphthm}\label{criticalfactor}{\rm\cite{MR2506388}}
For every graph $H$ on $h$ vertices, there are integers $c$ and $m_0$
such that for all integers $m\geq m_0$, if $G$ is a graph on $n=mh$ vertices,
then the following holds. If
$$\delta(G)\geq\left\{
\begin{array}{ll}
(1-{1\over \chi_{{\rm cr}}(H)})n+c & H\ {\rm is\ in\ Class\ 1},\\
(1-{1\over \chi(H)})n+c      & H\ {\rm is\ in\ Class\ 2},
\end{array}\right.$$
then $G$ has an $H$-factor.
\end{alphthm}

\begin{thm}
For any integer $r\geq 2$, there are constants $\alpha$, $\beta$, and $n_0$ such that
for any graph $G$ with $n$ vertices,
if $\delta(G)\geq ({1\over 2}-\alpha)n+\beta$ and $n\geq n_0$, then $\chi({\rm KG}(G,rK_2))=\zeta({\rm KG}(G,rK_2))=|E(G)|-{\rm ex}(G,rK_2)$.
\end{thm}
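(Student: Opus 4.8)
The plan is to deduce the theorem from Lemma~\ref{matchingdense} in the case $s=0$. Set $c={r-1\choose 2}+3(r-1)$. Since $\delta(G)$ will be of order $n/2$, the hypothesis $\delta(G)>{r+2\choose 2}$ of that lemma holds once $n\ge n_0$, and here $|V(H)|=|V(G)|$; so it suffices to produce constants $\alpha,\beta,n_0$ for which every admissible $G$ is itself $(r,c)$-locally Eulerian. I will obtain this by tiling $V(G)$ with vertex-disjoint complete bipartite subgraphs, each $(r,c)$-locally Eulerian by Lemma~\ref{locallyeulerian}; by the observation recorded just after Lemma~\ref{locallyeulerian}, any ${\cal F}$-factor whose parts are $(r,c)$-locally Eulerian makes $G$ itself $(r,c)$-locally Eulerian, and Lemma~\ref{matchingdense} then gives $\chi({\rm KG}(G,rK_2))=\zeta({\rm KG}(G,rK_2))=|E(G)|-{\rm ex}(G,rK_2)$.

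Choose an integer $t\ge\max\{11,\,8r+4c+2\}$, put $u=t+1$, and let $H_0=K_{t,t+1}\cup K_{u,u+1}$ be their disjoint union. Its components have coprime orders $2t+1$ and $2t+3$, so ${\rm hcf}_c(H_0)=1$; and since each part of each component differs from its partner by a single vertex, the only attainable differences of colour-class sizes are $0$ and $2$, whence $D(H_0)=\{0,2\}$ and ${\rm hcf}_\chi(H_0)=2$. Thus $H_0$ lies in Class~$1$. A short computation gives $\chi_{\rm cr}(H_0)=\frac{2t+2u+2}{t+u+2}<2$ with $1-\frac{1}{\chi_{\rm cr}(H_0)}=\frac12-\frac{1}{4(t+1)}$, so I set $\alpha=\frac{1}{8(t+1)}$; the Class~$1$ threshold of Theorem~\ref{criticalfactor} for $H_0$, namely $(\frac12-\frac1{4(t+1)})N+c'$, then falls below $(\frac12-\alpha)N+\beta$ for all large $N$.

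It remains to tile every residue class of $n$, whereas Theorem~\ref{criticalfactor} only tiles multiples of $h_0=|V(H_0)|=2t+2u+2$. I absorb the remainder into a single variable block. Fix $L\ge\max\{h_0,\,16r+8c+4\}$ and, given $G$ on $n$ vertices, pick $\ell\in[L,2L]$ with $\ell\equiv n\pmod{h_0}$ (possible since the interval is longer than $h_0$). As $\delta(G)$ is of order $n/2$, for $n$ large $G$ contains a copy $P$ of the balanced graph $K_{\lceil\ell/2\rceil,\lfloor\ell/2\rfloor}$ (by a standard Zarankiewicz-type bound), and $P$ is $(r,c)$-locally Eulerian by Lemma~\ref{locallyeulerian} because $\lfloor\ell/2\rfloor\ge 8r+4c+2$. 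Delete $V(P)$: then $G'=G-V(P)$ has $n-\ell\equiv 0\pmod{h_0}$ vertices, and since $\ell=O_r(1)$ we still have $\delta(G')\ge\delta(G)-\ell\ge(\frac12-\frac1{4(t+1)})(n-\ell)+c'$ once $n\ge n_0$. Theorem~\ref{criticalfactor} now yields an $H_0$-factor of $G'$; together with $P$ this is an ${\cal F}$-factor of $G$ into $(r,c)$-locally Eulerian complete bipartite graphs, so $G$ is $(r,c)$-locally Eulerian and the theorem follows.

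I expect the divisibility issue to be the main obstacle. Theorem~\ref{criticalfactor} is a single-graph factor theorem, and reaching a minimum-degree threshold strictly below $n/2$ forces $H_0$ into Class~$1$, hence disconnected with coprime component orders and therefore of order $h_0=\Theta_r(1)$ that is comparatively large. One cannot fix divisibility by padding $G$ with extra vertices in the style of Lemma~\ref{matchingdense}: making $|V(H)|$ a multiple of $h_0$ could require up to $h_0-1$ new vertices, i.e.\ $s$ as large as $h_0-1$, but then Lemma~\ref{locallyeulerian} would demand component sizes at least $8(r+s)+4c+2$, which exceeds $h_0$ and makes the constants diverge. Routing the remainder through one balanced block $P$ of bounded, tunable order keeps $s=0$ and all pieces of order $O_r(1)$, which is exactly what lets the chain of constants $\alpha,\beta,n_0$ close up; the only slightly delicate verifications are the two ${\rm hcf}$ computations certifying $H_0\in$ Class~$1$ and the size constraints of Lemma~\ref{locallyeulerian} for both $H_0$ and $P$.
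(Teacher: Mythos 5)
Your proposal is correct and takes essentially the same approach as the paper: both prove the theorem by applying Lemma~\ref{matchingdense} with $s=0$, certifying that $G$ itself is $(r,c)$-locally Eulerian via a vertex partition into complete bipartite blocks --- one balanced block of bounded order removed first to fix divisibility, the remainder tiled by an $H$-factor from Theorem~\ref{criticalfactor} applied to a two-component Class-1 bipartite graph --- with each block $(r,c)$-locally Eulerian by Lemma~\ref{locallyeulerian}. The differences are purely cosmetic: the paper tiles with $H=K_{t,t}\cup K_{t+1,t}$ where $t=2r^2+14r-6=8r+4c+2$ (exactly your lower bound on $t$) and removes $K_{\lceil T/2\rceil,\lfloor T/2\rfloor}$ with $4t+1\le T<8t+2$, whereas you tile with $K_{t,t+1}\cup K_{t+1,t+2}$ and remove a block of order $\ell\in[L,2L]$, with the analogous ${\rm hcf}$, $\chi_{\rm cr}$, and degree-slack verifications.
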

\begin{proof}{
Define $t=2r^2+14r-6$.
Set $H$ to be a bipartite graph with two connected components $C_1$ and $C_2$ isomorphic to
$K_{t,t}$ and $K_{t+1,t}$, respectively.
One can check that $H$ is in Class $1$. Hence,
by Theorem \ref{criticalfactor}, there are integers $c_1$ and $m_1$ such that
if $|V(G')|\geq m_1$, $|V(H)|$ divides $|V(G')|$, and $\delta(G') \geq (1-{1\over \chi_{{\rm cr}}(H)})|V(G')|+c_1$, then
the graph $G'$ has an $H$-factor.
Let $T$ be an integer such that $4t+1\leq T<8t+2$ and $4t+1|n-T$. It is known that
if $n$ is sufficiently large and $\delta(G)\geq \eta n$, where $0< \eta <1$, then $G$ contains a copy of
the complete bipartite graph $K_{\lceil{T\over 2}\rceil,\lfloor{T\over 2}\rfloor}$.
Note that ${1\over \chi_{{\rm cr}}(H)}={1\over 2}+{1\over 8t+2}$.
Set $\alpha={1\over 8t+2}$ and $\beta=c_1+8t-1$. If $\delta(G)\geq ({1\over 2}-\alpha)n+\beta$ and
$n$ is sufficiently large, then $G$ contains $K_{\lceil{T\over 2}\rceil,\lfloor{T\over 2}\rfloor}$ and also
the graph $G\setminus K_{\lceil{T\over 2}\rceil,\lfloor{T\over 2}\rfloor}$
has an $H$-factor. Hence, $G$ can be decomposed into the complete bipartite graphs
$K_{t,t}$, $K_{t+1,t}$, and $K_{\lceil{T\over 2}\rceil,\lfloor{T\over 2}\rfloor}$.
In view of Lemma \ref{locallyeulerian}, these graphs
are $(r,c)$-locally Eulerian graphs with
$c={r-1\choose 2}+3(r-1)$.
Therefore, $G$ is an $(r,c)$-locally Eulerian graph; and consequently,
by Lemma \ref{matchingdense}, the assertion holds.
}
\end{proof}
\subsection{Permutation Graphs}
Let $m,n,r$ be positive integers, where $r\leq m,n$.
For an $r$-subset $A\subseteq [m]$ and an injective map $f:A\longrightarrow [n]$, the ordered pair
$(A,f)$ is said to be an $r$-partial permutation \cite{MR2892478}.
Let $S_r(m, n)$ denotes the set of all $r$-partial permutations.
Two partial permutations $(A, f)$ and $(B, g)$ are said to be intersecting, if there exists an
$x\in A\cap B$ such that $f(x) = g(x)$.
Note that $S_n(n, n)$ is the set of all $n$-permutations. The permutation graph $S_r(m, n)$
has all $r$-partial permutations $(A, \sigma)$ as its vertex set and two $r$-partial permutations are adjacent
if and only if they are~not intersecting.
Note that $S_r(m,n)\cong S_r(n,m)$; and therefore, for the simplicity, we assume
that $m\geq n$ for all permutation graphs.
One can see that the permutation graph $S_r(m, n)$ is isomorphic to
${\rm KG}(K_{m,n}, rK_2)$.

Next theorem gives a sufficient condition for a balanced bipartite graph to
have a decomposition into complete bipartite subgraphs.
\begin{alphthm}{\rm \cite{MR2519934}}\label{zhao}
For any integer $q\geq 2$, there exists a positive integer $m_0$ such that for all $m \geq m_0$,
the following holds. If $G=(X,Y)$ is a balanced
bipartite graph on $2n = 2mq$ vertices, i.e., $|X|=|Y|=mq$, with
$$\delta(G) \geq
\left \{ \begin{array}{ll}
{n\over 2}+q-1 & {\rm if}\ m\ {\rm is\ even}\\
{n+3q\over 2} -2  & {\rm if}\ m\ {\rm is\ odd,}
\end{array}\right.$$
then $G$ has  a $K_{q,q}$-factor.
\end{alphthm}

Now, we investigate the chromatic number of general Kneser graph ${\rm KG}(G, rK_2)$ provided
that $G=(X,Y)$ is a balanced ($|X|=|Y|$) dense bipartite graph.
In particular, we determine the chromatic number of
any permutation graph provided that the number of its vertices is large enough.
For more about permutation graphs, see \cite{MR2009400, MR2489272,MR2202076}.
\begin{thm}\label{bipartitematchinggraphs}
For any positive integer $r$, there exist constants $q$ and $m$ such that
for all $n\geq m$, the following holds.
If $G$ is a graph on $2n$ vertices which has a bipartite subgraph $H=(U,V)$
with $|U|=|V|=n$ and $\delta(H) \geq  {n\over 2}+q$, then
$\chi({\rm KG}(G,rK_2))=\zeta({\rm KG}(G,rK_2))=|E(G)|-{\rm ex}(G,rK_2)$.
\end{thm}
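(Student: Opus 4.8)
The plan is to reduce Theorem~\ref{bipartitematchinggraphs} to the machinery already developed for matching-dense graphs, namely Lemma~\ref{matchingdense} together with Lemma~\ref{locallyeulerian}. By Lemma~\ref{matchingdense}, it suffices to produce an $(r+s,c)$-locally Eulerian supergraph of $G$ with the required degree bound $c \geq {r-1 \choose 2}+(s+3)(r-1)$ and with $\delta(G)$ exceeding ${r+2 \choose 2}+(r-2)s$; once such a structure is in place, the conclusion $\chi({\rm KG}(G,rK_2))=\zeta({\rm KG}(G,rK_2))=|E(G)|-{\rm ex}(G,rK_2)$ follows immediately. Since $\delta(H) \geq n/2+q$ is the only hypothesis on $G$ and since $G$ contains the balanced bipartite subgraph $H=(U,V)$ on $2n$ vertices, the natural route is to work inside $H$ and find a $K_{q,q}$-factor there, then assemble the locally Eulerian structure from the factor pieces.

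First I would fix $t=2r^2+14r-6$ (matching the choice in Theorem~\ref{expandercondition} and the preceding theorem) and set the constant $q$ so that Theorem~\ref{zhao} applies with $K_{q,q}$ blocks large enough to invoke Lemma~\ref{locallyeulerian}; concretely I want $q \geq t$ so that each $K_{q,q}$ satisfies the bandwidth-free hypotheses $11 \leq t \leq t' \leq 2t-2$ and $t \geq 8r+4c+2$ needed to conclude $(r,c)$-local Eulerianness with $c={r-1 \choose 2}+3(r-1)$. Then, applying Theorem~\ref{zhao} to the balanced bipartite graph $H$ (whose minimum degree ${n\over 2}+q$ meets the stated threshold ${n\over 2}+q-1$ in the even case, after possibly adjusting parity by the usual padding trick of adding a bounded number of vertices), I obtain a $K_{q,q}$-factor of $H$. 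Each block is $(r,c)$-locally Eulerian by Lemma~\ref{locallyeulerian}, and since local Eulerianness is inherited from an ${\cal F}$-factor whose members are all $(r,c)$-locally Eulerian (as noted in the paragraph preceding Theorem~\ref{expander}), $H$ itself is $(r,c)$-locally Eulerian. Because $H$ is a subgraph of $G$ on the same vertex set, taking $s=0$ in Lemma~\ref{matchingdense} with supergraph $H$ gives exactly what is needed.

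The main obstacle I anticipate is managing the parity and divisibility bookkeeping so that Theorem~\ref{zhao} genuinely applies: that theorem requires the bipartite graph to be balanced on $2n=2mq$ vertices with $q \mid n$, and the minimum-degree threshold differs by a small additive constant between the even-$m$ and odd-$m$ cases. Accordingly I would choose $m$ and $q$ so that $n$ is divisible by $q$ for all $n \geq m$ in the relevant residue class, handling the leftover vertices (when $q \nmid n$) by peeling off one additional complete bipartite block $K_{\lceil T/2 \rceil, \lfloor T/2 \rfloor}$ of bounded size $T$, exactly as in the proofs of Theorem~\ref{expandercondition} and the dense-degree theorem above; the additive slack $q$ in $\delta(H) \geq n/2+q$ is precisely what absorbs the $q-1$ and $(n+3q)/2-2$ thresholds of Theorem~\ref{zhao}. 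Once the decomposition of $H$ into $K_{q,q}$'s (plus one bounded extra block) is secured and each piece is verified $(r,c)$-locally Eulerian via Lemma~\ref{locallyeulerian}, the degree hypothesis $\delta(G) \geq \delta(H) > {r+2 \choose 2}+(r-2)\cdot 0$ of Lemma~\ref{matchingdense} holds for $n$ large, and the theorem follows.
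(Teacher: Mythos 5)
Your overall route is the same as the paper's: reduce to Lemma~\ref{matchingdense} with $s=0$ by showing the spanning balanced bipartite subgraph $H$ is $(r,c)$-locally Eulerian with $c={r-1\choose 2}+3(r-1)$, get the locally Eulerian structure from a complete-bipartite factor of $H$ supplied by Theorem~\ref{zhao}, fix divisibility by peeling off one bounded complete bipartite block, and certify each block via Lemma~\ref{locallyeulerian}. But there is a genuine flaw in your constant management: you take the factor blocks to be $K_{q,q}$ where $q$ is the \emph{same} constant as the degree slack in $\delta(H)\geq {n\over 2}+q$. Theorem~\ref{zhao}'s threshold for a $K_{q,q}$-factor when $m=n/q$ is odd is ${n+3q\over 2}-2={n\over 2}+{3q\over 2}-2$, which strictly exceeds ${n\over 2}+q$ as soon as $q>4$; so no choice of $q$ lets your hypothesis meet the threshold in the odd case, contrary to your claim that the additive slack $q$ ``is precisely what absorbs'' the two thresholds. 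Peeling does not rescue this with your constants: forcing the leftover side count $n-t'$ to be divisible by $2q$ (to make the new $m$ even) may require a balanced peeled block with $t'$ as large as roughly $2q$, which costs $t'$ in minimum degree while lowering the threshold by only $t'/2$, so the deficit still grows linearly in $q$. Nor can ``padding with a bounded number of vertices'' help, since a $K_{q,q}$-factor of a padded graph is not a factor of $H$.

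The repair --- and it is exactly what the paper does --- is to decouple the block size from the slack. Fix the block side $t=2r^2+14r-6$ as a function of $r$ alone (so Lemma~\ref{locallyeulerian} applies with $c={r-1\choose 2}+3(r-1)$), let $q_1$ be large enough that $\delta\geq {n\over 2}+q_1$ guarantees a $K_{t,t}$-factor for \emph{both} parities of $n/t$ (so $q_1\geq {3t\over 2}-2$ suffices), peel one balanced block $K_{t',t'}$ with $t\leq t'<2t$ and $t\mid n-t'$, and then take the theorem's constant to be $q=q_1+2t-1$: after deleting the peeled block, the remaining balanced bipartite graph has minimum degree at least ${n\over 2}+q-t'\geq {n-t'\over 2}+q_1$, so Theorem~\ref{zhao} applies to it. Two smaller points. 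First, the peeled block must be balanced --- your $K_{\lceil T/2\rceil,\lfloor T/2\rfloor}$ with $T$ odd would unbalance the remainder and put it outside the scope of Theorem~\ref{zhao}; the paper uses $K_{t',t'}$. Second, Lemma~\ref{matchingdense} requires the locally Eulerian graph to \emph{contain} $G$, so you cannot invoke it ``with supergraph $H$''; rather, since $H$ is a spanning subgraph of $G$, the edge-disjoint Eulerian subgraphs witnessing that $H$ is $(r,c)$-locally Eulerian also lie in $G$ and witness that $G$ itself is $(r,c)$-locally Eulerian, and you then apply the lemma with $G$ playing both roles and $s=0$.
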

\begin{proof}{
In view of Lemma~\ref{matchingdense}, it is sufficient to show that the
graph $H$ is an $(r,c)$-locally Eulerian graph, where $c={r-1\choose 2}+3(r-1)$.
Set $t=2r^2+14r-6$.
By Theorem \ref{zhao}, there are integers $q_1$ and $m_1$
such that if $n\geq m_1$ and $t|n$, then any balanced bipartite graph $H'$ with $2n$ vertices and
$\delta(H')\geq {n\over 2}+q_1$ has a $K_{t,t}$-factor.

Let $t'$ be an integer, where $t\leq t'<2t$ and $t|n-t'$.
It is known that if $n$ is sufficiently large and $\delta(H)\geq \eta n$,
where $0< \eta <1$, then $H$ contains a copy of
the complete bipartite graph $K_{t',t'}$.
Define $q=q_1+2t-1$.
Note that if $n$ is sufficiently large, then $H$ contains a copy of $K_{t','t}$ and also,
in view of Theorem~\ref{zhao}, $H\setminus K_{t',t'}$ has a $K_{t,t}$-factor.
This implies that $H$ can be decomposed into complete bipartite graphs $K_{t',t'}$ and $K_{t,t}$.
In view of Lemma \ref{locallyeulerian}, these graphs
are $(r,c)$-locally Eulerian graphs, where $c={r-1\choose 2}+3(r-1)$.
Therefore, $H$ and $G$ are $(r,c)$-locally Eulerian graphs; and consequently,
by Lemma \ref{matchingdense}, the assertion holds.}\end{proof}
\begin{cor}\label{permutationgraph}
Let $m,n,r$ be positive integers, where $m\geq n \geq r$. If $m$ is large enough, then
$$\chi({\rm KG}(K_{m,n}, rK_2))=\zeta({\rm KG}(K_{m,n},rK_2))=m(n-r+1).$$
\end{cor}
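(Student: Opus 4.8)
The plan is to apply Theorem~\ref{bipartitematchinggraphs} to the complete bipartite graph $K_{m,n}$, after reducing to the balanced case. Recall that $\chi({\rm KG}(K_{m,n},rK_2))$ is precisely the chromatic number of the permutation graph $S_r(m,n)$, so it suffices to compute $|E(K_{m,n})|-{\rm ex}(K_{m,n},rK_2)$ and to verify that the hypotheses of Theorem~\ref{bipartitematchinggraphs} are met when $m$ is large. First I would handle the easy arithmetic. We have $|E(K_{m,n})|=mn$, so the real content is showing ${\rm ex}(K_{m,n},rK_2)=mn-m(n-r+1)=m(r-1)$. An $rK_2$-free spanning subgraph of $K_{m,n}$ is one whose maximum matching has size at most $r-1$; by K\H onig's theorem on bipartite graphs, the maximum matching equals the minimum vertex cover, so an $rK_2$-free subgraph is covered by $r-1$ vertices. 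To maximize edges under this constraint one takes the $r-1$ vertices of largest degree; since we assumed $m\geq n$, the vertices on the side of size $n$ each have degree $m$, so choosing $r-1$ of them (legitimate because $n\geq r$, hence $n\geq r-1$) covers $m(r-1)$ edges, and this is extremal. Thus ${\rm ex}(K_{m,n},rK_2)=m(r-1)$ and $|E(K_{m,n})|-{\rm ex}(K_{m,n},rK_2)=m(n-r+1)$, matching the claimed value.

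Next I would invoke Theorem~\ref{bipartitematchinggraphs} to obtain $\chi({\rm KG}(G,rK_2))=\zeta({\rm KG}(G,rK_2))=|E(G)|-{\rm ex}(G,rK_2)$ for a suitable $G$. The catch is that Theorem~\ref{bipartitematchinggraphs} is stated for graphs on $2n$ vertices containing a \emph{balanced} bipartite subgraph $H=(U,V)$ with $|U|=|V|=n$ and large minimum degree, whereas $K_{m,n}$ is generally unbalanced with $m\geq n$. So the key reduction is to balance the two sides. I would take the larger side $[m]$ and partition its $m$ vertices so as to extract a balanced complete bipartite subgraph $K_{n,n}\subseteq K_{m,n}$ on $2n$ vertices. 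Applying Theorem~\ref{bipartitematchinggraphs} with $G=H=K_{n,n}$ (which is $n$-regular, so $\delta(H)=n\geq n/2+q$ once $n$ is large compared to the constant $q$ from that theorem) yields the formula for the balanced graph $K_{n,n}$. The main obstacle is transferring the conclusion from $K_{n,n}$ back to the genuinely unbalanced $K_{m,n}$: the chromatic-number computation and the locally-Eulerian machinery are carried out for balanced bipartite graphs, and one must argue that adjoining the extra $m-n$ vertices on the large side does not change the relevant altermatic count.

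To surmount this, I would argue directly at the level of the locally-Eulerian hypothesis underlying Lemma~\ref{matchingdense}, rather than forcing $K_{m,n}$ into the $2n$-vertex template verbatim. The cleanest route is to show that $K_{m,n}$ itself is $(r,c)$-locally Eulerian with $c={r-1\choose 2}+3(r-1)$, exactly as was done inside the proofs of Theorem~\ref{expandercondition} and Theorem~\ref{bipartitematchinggraphs}: decompose $K_{m,n}$ into complete bipartite pieces of the form $K_{t,t}$ (and one slightly unbalanced remainder piece $K_{t',t''}$ with $t\leq t',t''\leq 2t-2$) by a factor/covering argument, with $t=2r^2+14r-6$, and invoke Lemma~\ref{locallyeulerian} to see each piece is $(r,c)$-locally Eulerian. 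One does need the side sizes to be large enough and divisibility to cooperate, but since $m$ is assumed large enough and we are free to choose the block sizes, such a decomposition exists; because an $\mathcal F$-factor into $(r,c)$-locally Eulerian members makes the whole graph $(r,c)$-locally Eulerian, $K_{m,n}$ qualifies. Finally I would check the degree condition $\delta(K_{m,n})=n>{r+2\choose 2}+(r-2)s$ of Lemma~\ref{matchingdense} (here one may take $s=0$ since $K_{m,n}$ is itself the ambient locally-Eulerian host, or a small $s$ if a few auxiliary vertices are added to balance parities), which holds for $n$ large relative to $r$. Lemma~\ref{matchingdense} then delivers $\chi({\rm KG}(K_{m,n},rK_2))=\zeta({\rm KG}(K_{m,n},rK_2))=|E(K_{m,n})|-{\rm ex}(K_{m,n},rK_2)=m(n-r+1)$, completing the proof.
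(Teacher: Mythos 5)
Your upper bound and the computation ${\rm ex}(K_{m,n},rK_2)=m(r-1)$ are correct and match the paper. The gap is in the lower bound: the corollary assumes only that $m$ is large, while $n$ may be as small as $r$, and your argument needs \emph{both} sides to be large. Concretely, applying Lemma~\ref{matchingdense} with $s=0$ to $G=K_{m,n}$ requires $\delta(K_{m,n})=n>{r+2\choose 2}$, and your decomposition needs pieces $K_{t,t'}$ with $t\geq 8r+4c+2=2r^2+14r-6$ fitting inside $K_{m,n}$, which forces $n\geq t$. Both conditions fail when, say, $n=r$: the corollary still asserts $\chi({\rm KG}(K_{m,r},rK_2))=m$ for large $m$, but your proof says nothing about this case. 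You acknowledge this yourself when you write that the degree condition ``holds for $n$ large relative to $r$'' --- that is not a hypothesis you are given. Nor can the locally-Eulerian framework be patched for small $n$: a vertex $u$ on the side of size $m$ has degree $n$ in $K_{m,n}$, so any nontrivial Eulerian subgraph $H_u$ containing $u$ has $\deg_{H_u}(u)\leq n$ while every other vertex of $H_u$ has degree at least $2$; the required inequality $\deg_{H_u}(u)\geq (r+s-1)\cdot 2+c$ then forces $n\geq 2(r+s-1)+c$, so Lemma~\ref{matchingdense} is intrinsically unavailable in this regime.

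The paper resolves exactly this with an idea absent from your proposal: it applies Theorem~\ref{bipartitematchinggraphs} only to the \emph{balanced} graph $K_{m,m}$ (both sides equal to $m$, hence large), and then transfers the lower bound \emph{downward} to $K_{m,n}$ by a coloring-extension argument. Suppose ${\rm KG}(K_{m,n},rK_2)$ had a proper coloring $f$ with fewer than $m(n-r+1)$ colors. Add $m-n$ vertices to the small side, joined to all of the large side, creating $K_{m,m}$ with new edges $e_1,\ldots,e_{(m-n)m}$; color a matching $M$ of $K_{m,m}$ by $f(M)$ if $M\subseteq K_{m,n}$, and otherwise by the index of the smallest new edge in $M$ (shifted past the old colors). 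Two disjoint matchings cannot share the same smallest new edge, so this coloring is proper and uses fewer than $m(n-r+1)+(m-n)m=m(m-r+1)$ colors, contradicting the balanced case. Note also that your first instinct --- using $K_{n,n}\subseteq K_{m,n}$ --- only yields $\chi\geq n(n-r+1)$, which is too weak; the trick is to go up to the balanced supergraph $K_{m,m}$, not down to the balanced subgraph. In the regime where $n$ is also large relative to $r$, your direct decomposition does work (it essentially re-runs the proof of Theorem~\ref{bipartitematchinggraphs}), but that covers only part of the statement.
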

\begin{proof}{
In view of Hall's Theorem, any maximal $rK_2$-free subgraph of $K_{m,n}$ has $(r-1)m$ edges. Hence,
in view of Lemma~\ref{lowerupper}, we have $\chi({\rm KG}(K_{m,n}, rK_2))\leq m(n-r+1)$.
In view of Lemma~\ref{bipartitematchinggraphs}, if $m$ is sufficiently large, then $\chi({\rm KG}(K_{m,m}, rK_2))=m(m-r+1)$.
Now, we show that
for any positive integer $n< m$, if $m$ is sufficiently large, then $\chi({\rm KG}(K_{m,n}, rK_2))=m(n-r+1)$. To see this, on the
contrary, suppose that $f: V({\rm KG}(K_{m,n}, rK_2))\longrightarrow \{1,2,\ldots, \chi({\rm KG}(K_{m,n}, rK_2))\}$ is a proper coloring of
${\rm KG}(K_{m,n}, rK_2)$, where $\chi({\rm KG}(K_{m,n}, rK_2))<m(n-r+1)$.
Add $m-n$ new vertices to the small part of $K_{m,n}$ and join them to all vertices in the other part to construct
$K_{m,m}$, and call the new edges $e_1,\ldots,e_{(m-n)m}$. Extend the coloring $f$ to a proper coloring $g$ for
${\rm KG}(K_{m,m}, rK_2)$ as follows. If a matching $M$ is a subset of $K_{m,n}$, then set $g(M)=f(M)$; otherwise,
assume that $i$ is the smallest positive integer such that $e_i\in M$, in this case set $g(M)=i+\chi({\rm KG}(K_{m,n}, rK_2))$.
This provides a proper coloring for ${\rm KG}(K_{m,m}, rK_2)$ with less than $m(m-r+1)$ colors which is a contradiction.
}\end{proof}
Let $s\geq t$ be positive integers and $G=G(X,Y)$
be a connected $(s,t)$-regular connected bipartite graph. Theorem \ref{grk2} implies that if $s$ is an even integer, then
for any $r\leq |X|$ we have $\chi({\rm KG}(G,rK_2))=s(|X|-r+1)$. This result shows that $\chi({\rm KG}(K_{m,n}, rK_2))=\chi(S_r(m,n))=m(n-r+1)$
provided that $m$ is an even integer and $m\geq n\geq r$. However, if $m$ is a small odd value, then
the chromatic number of the permutation graph
$S_r(m,n)$ is unknown.

\begin{cor}\label{permutationgrapheven}
Let $m,n,r$ be positive integers, where $m\geq n \geq r$. If $m$ is even, then
$$\chi({\rm KG}(K_{m,n}, rK_2))=\zeta({\rm KG}(K_{m,n},rK_2))=m(n-r+1).$$
\end{cor}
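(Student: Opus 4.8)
The plan is to realize $K_{m,n}$ as a biregular bipartite graph, read off its degree sequence, and apply Theorem~\ref{grk2} for the chromatic number, then to upgrade the equality to the altermatic number by a direct count on a carefully chosen edge ordering. Write the two parts of $K_{m,n}$ as $Y$ (of size $n$, each vertex of degree $m$) and $X$ (of size $m$, each vertex of degree $n$); since $m\geq n$ the largest degrees are the $n$ values equal to $m$, so in the notation of Theorem~\ref{grk2} I take $v_1,\ldots,v_n$ to be the vertices of $Y$. Because $r-1\leq n-1$, the set $\{v_1,\ldots,v_{r-1}\}$ lies inside the independent part $Y$, hence is independent, and ${\rm deg}_G(v_{r-1})=m$ is even by hypothesis. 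Moreover $K_{m,n}$ is bipartite, so its odd girth is infinite and the requirement $r\leq\max\{{g\over 2},{{\rm deg}_G(v_{r-1})+1\over 4}\}$ holds vacuously. Thus all hypotheses of Theorem~\ref{grk2} are met and it yields
$$\chi({\rm KG}(K_{m,n},rK_2))=|E(K_{m,n})|-\sum_{i=1}^{r-1}{\rm deg}_G(v_i)=mn-(r-1)m=m(n-r+1).$$
(The upper bound $\chi\leq m(n-r+1)$ also follows from Lemma~\ref{lowerupper} together with the fact that, by Hall's/K\"onig's theorem, every $rK_2$-free subgraph of $K_{m,n}$ has at most $(r-1)m$ edges, so ${\rm ex}(K_{m,n},rK_2)=(r-1)m$.)

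For the altermatic number, recall from Theorem~\ref{lowermain} that $\zeta\leq\chi$, so it remains to prove $\zeta({\rm KG}(K_{m,n},rK_2))\geq m(n-r+1)$. By the definition of $\zeta$ applied to the natural representation (whose general Kneser graph is ${\rm KG}(K_{m,n},rK_2)$) and by the computation in the proof of Lemma~\ref{lowerupper}, it suffices to exhibit an ordering $\sigma$ of $E(K_{m,n})$ with ${\rm ex}_{alt}(K_{m,n},rK_2,\sigma)={\rm ex}(K_{m,n},rK_2)=(r-1)m$; this forces $\zeta\geq|E|-{\rm ex}_{alt}=m(n-r+1)$ and closes the gap. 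I would take $\sigma$ from an Eulerian tour of $K_{m,n}$ (after the parity correction used in Theorem~\ref{grk2}) that starts at a minimum-degree vertex, i.e. a vertex of $X$ of degree $n$, and then study an alternating $2$-coloring of length $(r-1)m+1$. Since $m$ is even this length is odd, so the majority color class $G^{j}$ has $(r-1)m/2+1$ edges; assuming $G^{j}$ is $rK_2$-free, I would derive a contradiction from the Tutte--Berge formula exactly as in case (a) of Theorem~\ref{grk2}: the cover set $S^{j}\cup X^{j}$ has at most $r-1$ vertices, each carrying at most $\lceil{\rm deg}_G(\cdot)/2\rceil\leq m/2$ edges of color $j$, whence $|E(G^{j})|\leq(r-1)m/2$, contradicting $(r-1)m/2+1$.

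The hard part will be the boundary term. In the odd-length count the start vertex $p$ of the Eulerian tour may carry one extra edge of color $j$, namely $\lceil{\rm deg}_G(p)/2\rceil+1$ rather than $\lceil{\rm deg}_G(p)/2\rceil$; this single unit is precisely the difference between the strong-altermatic bound used in Theorem~\ref{grk2} (which already gives $\zeta_s=\chi=m(n-r+1)$) and the altermatic bound needed for $\zeta$. Starting the tour at a degree-$n$ vertex neutralizes it whenever $\lceil n/2\rceil+1\leq m/2$, i.e. whenever $n\leq m-2$: then even if $p$ lies in the cover one gets $|E(G^{j})|\leq(r-2){m\over 2}+\lceil n/2\rceil+1<(r-1){m\over 2}+1$, the desired contradiction. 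Hence for $n\leq m-2$ the argument is complete and $\zeta=\chi=m(n-r+1)$.

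The remaining near-balanced cases $n\in\{m-1,m\}$ are the delicate ones, since there every vertex that can start the tour has degree $m-1$ or $m$ and the boundary unit cannot be absorbed by this choice of ordering alone; indeed the small instance $K_{2,2}$ already shows that the \emph{natural} representation need not attain $\zeta$, and that a different representation may be required (there, replacing the $2K_2$-hyperedges by singleton hyperedges drops $alt$ to $0$ and recovers $\zeta=\chi$). For these cases I would invoke Corollary~\ref{permutationgraph} when $m$ is large, and for the small balanced instances pass to a more economical representation of the homomorphism type of the matching graph, mirroring the $K_{2,2}$ computation. Verifying that such a representation always exists, together with the precise bookkeeping of the boundary term, is the step I expect to require the most care.
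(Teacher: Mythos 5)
Your computation of the chromatic number is exactly the paper's own proof: the paper obtains this corollary by applying Theorem~\ref{grk2} to $K_{m,n}$ with $v_1,\ldots,v_{r-1}$ chosen among the $n$ vertices of degree $m$ (they form an independent set, their common degree $m$ is even, and the odd girth of a bipartite graph is infinite), giving $\chi({\rm KG}(K_{m,n},rK_2))=mn-(r-1)m=m(n-r+1)$. You are also right to be suspicious about the $\zeta$ part: since all of $v_1,\ldots,v_{r-1}$ have even degree, the proof of Theorem~\ref{grk2} runs through ${\rm ex}_{salt}$ and therefore certifies only the \emph{strong} altermatic number $\zeta_s$, not $\zeta$; the paper offers no separate argument for the $\zeta$ claim in this corollary, so trying to prove ${\rm ex}_{alt}(K_{m,n},rK_2,\sigma)=(r-1)m$ directly is the right instinct.

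However, your execution of that step has two problems, one of them a genuine gap. First, you misidentify the hard cases. When $n$ is odd (in particular when $n=m-1$), $K_{m,n}$ is not an even graph, so in the paper's construction the Eulerian tour starts at the auxiliary vertex $w$; after the $w$-edges are deleted, \emph{every} vertex of $G$ obeys the $\bigl\lceil{\rm deg}_G(\cdot)/2\bigr\rceil$ bound per color class, with no boundary bonus whatsoever, irrespective of the parity of the length of the alternating coloring. Hence for odd $n$ the odd-length count gives $(r-1)m/2+1=|E(G^j)|\leq(r-1)\cdot m/2$, a contradiction, and the argument closes; combined with your case $n\leq m-2$, the only genuinely delicate case is $n=m$ (both even), not $n\in\{m-1,m\}$. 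Second, and this is the real hole: for $n=m$ your proposal does not finish. Corollary~\ref{permutationgraph} covers only $m$ sufficiently large, and for small even $m=n$ you appeal to an unspecified ``more economical representation'' of the homomorphism type of ${\rm KG}(K_{m,m},rK_2)$ which you never construct. Your own $K_{2,2}$ computation shows this is not a matter of bookkeeping: for $m=n=2$, $r=2$, every ordering $\sigma$ satisfies ${\rm ex}_{alt}(K_{2,2},2K_2,\sigma)\geq 3>2={\rm ex}(K_{2,2},2K_2)$, so the natural representation provably cannot attain $\zeta$, and a new idea (a different hypergraph with the same general Kneser graph up to homomorphism equivalence) is required. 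As written, your argument proves $\chi=m(n-r+1)$ in all cases and $\zeta=\chi$ whenever $n<m$, or $n=m$ with $m$ large, but leaves the balanced case $n=m$ with $m$ small unproved.
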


The aforementioned results motivate us to consider the following conjecture.
\begin{con}
For any connected graph $G$ and positive integer $r$, we have
$$\chi({\rm KG}(G,rK_2))=|E(G)|-{\rm ex}(G,rK_2).$$
\end{con}

\noindent{\bf Acknowledgement:}
The authors would like to express their deepest gratitude to Professor Carsten~Thomassen for his insightful comments.
They also appreciate the detailed valuable comments of  Dr.~Saeed~Shaebani. This paper was
written while Hossein Hajiabolhassan was visiting School of Mathematics, Institute for Research in Fundamental Sciences~(IPM). He acknowledges the support of IPM. 
Moreover, they would like to thank Skype for sponsoring their endless conversations in two countries.
\def\cprime{$'$} \def\cprime{$'$}


\begin{thebibliography}{10}

\bibitem{Alishahi2015}
Meysam Alishahi and Hossein Hajiabolhassan.
\newblock On the chromatic number of general kneser hypergraphs.
\newblock {\em Journal of Combinatorial Theory, Series B} (2015), http://dx.doi.org/10.1016/j.jctb.2015.05.010. 

\bibitem{2014arXiv1401.0138A}
M.~{Alishahi} and H.~{Hajiabolhassan}.
\newblock {Chromatic number via Tur\'an number}.
\newblock {\em ArXiv e-prints}, December 2013.


\bibitem{MR2009400}
Peter~J. Cameron and C.~Y. Ku.
\newblock Intersecting families of permutations.
\newblock {\em European J. Combin.}, 24(7):881--890, 2003.

\bibitem{MR2892478}
Xing~Bo Geng, Jun Wang, and Hua~Jun Zhang.
\newblock Structure of independent sets in direct products of some
  vertex-transitive graphs.
\newblock {\em Acta Math. Sin. (Engl. Ser.)}, 28(4):697--706, 2012.

\bibitem{MR2489272}
Chris Godsil and Karen Meagher.
\newblock A new proof of the {E}rd{\H o}s-{K}o-{R}ado theorem for intersecting
  families of permutations.
\newblock {\em European J. Combin.}, 30(2):404--414, 2009.

\bibitem{MR2871764}
Hao Huang, Choongbum Lee, and Benny Sudakov.
\newblock Bandwidth theorem for random graphs.
\newblock {\em J. Combin. Theory Ser. B}, 102(1):14--37, 2012.

\bibitem{MR3002574}
Fiachra Knox and Andrew Treglown.
\newblock Embedding spanning bipartite graphs of small bandwidth.
\newblock {\em Combin. Probab. Comput.}, 22(1):71--96, 2013.

\bibitem{MR2202076}
C.~Y. Ku and I.~Leader.
\newblock An {E}rd{\H o}s-{K}o-{R}ado theorem for partial permutations.
\newblock {\em Discrete Math.}, 306(1):74--86, 2006.

\bibitem{MR2506388}
Daniela K{\"u}hn and Deryk Osthus.
\newblock The minimum degree threshold for perfect graph packings.
\newblock {\em Combinatorica}, 29(1):65--107, 2009.

\bibitem{MR2644240}
Daniela K{\"u}hn, Deryk Osthus, and Andrew Treglown.
\newblock Hamiltonian degree sequences in digraphs.
\newblock {\em J. Combin. Theory Ser. B}, 100(4):367--380, 2010.

\bibitem{MR1723879}
C.~C. Lindner and A.~Rosa.
\newblock Monogamous decompositions of complete bipartite graphs, symmetric
  {H}-squares, and self-orthogonal {$1$}-factorizations.
\newblock {\em Australas. J. Combin.}, 20:251--256, 1999.

\bibitem{MR514625}
L.~Lov{\'a}sz.
\newblock Kneser's conjecture, chromatic number, and homotopy.
\newblock {\em J. Combin. Theory Ser. A}, 25(3):319--324, 1978.

\bibitem{MR512648}
A.~Schrijver.
\newblock Vertex-critical subgraphs of {K}neser graphs.
\newblock {\em Nieuw Arch. Wisk. (3)}, 26(3):454--461, 1978.

\bibitem{MR2519934}
Yi~Zhao.
\newblock Bipartite graph tiling.
\newblock {\em SIAM J. Discrete Math.}, 23(2):888--900, 2009.

\end{thebibliography}
\end{document}